\theoremstyle{definition}
\newtheorem{theorem}{Theorem}[section]
\newtheorem{proposition}[theorem]{Proposition}
\newtheorem{lemma}[theorem]{Lemma}
\newtheorem{remark}[theorem]{Remark}
\newtheorem{corollary}[theorem]{Corollary}
\newtheorem*{acknowledgements}{Acknowledgements}
\numberwithin{equation}{section}
\DeclareMathOperator*{\supp}{spt}
\renewcommand\div{\operatorname{div}}
\newcommand{\ol}{\overline}
\newcommand{\Lap}{\Delta}
\newcommand{\ssubset}{\subset\subset}
\newcommand{\reg}{\operatorname{reg}}
\newcommand{\sing}{\operatorname{sing}}
\newcommand{\Ric}{\operatorname{Ric}}
\renewcommand*\d{\mathop{}\!\mathrm{d}}
\DeclareMathOperator*{\vol}{Vol}
\title{First stability eigenvalue of singular minimal hypersurfaces in spheres}
\author{Jonathan J. Zhu}%
\address{Department of Mathematics,
Harvard University, Cambridge, MA 02138, USA}
\email{jjzhu@math.harvard.edu}
\begin{document}
\date{\today}
\maketitle

\begin{abstract}

In this short note we extend an estimate due to J. Simons on the first stability eigenvalue of minimal hypersurfaces in spheres to the singular setting. Specifically, we show that any singular minimal hypersurface in $\mathbf{S}^{n+1}$, which is not totally geodesic and satisfies the $\alpha$-structural hypothesis, has first stability eigenvalue at most $-2n$, with equality if and only if it is a product of two round spheres. The equality case was settled independently in the classical setting by Wu and Perdomo. 
\end{abstract}

\section{Introduction}
\label{sec:introduction}

A hypersurface $M^n$ in a Riemannian manifold $N^{n+1}$ is said to be minimal if the first variation of area is zero, or equivalently if its mean curvature vanishes identically. The study of minimal hypersurfaces is one of the foremost topics in differential geometry. Minimal hypersurfaces in the round sphere $\mathbf{S}^{n+1}$ are of particular interest since they correspond to minimal cones in Euclidean space, which arise as blowup models for singularities of minimal hypersurfaces in general ambient spaces. 

A classical problem concerns the classification of \textit{stable} minimal cones in Euclidean space. An orientable minimal hypersurface $M^n \subset N^{n+1}$ is stable if the second variation of area is nonnegative, or equivalently if the first eigenvalue $\lambda_1=\lambda_1(L)$ of the Jacobi operator \begin{equation} L = \Lap_M + |A|^2 + \Ric^N(\nu,\nu)\end{equation} is nonnegative, where $A$ is the second fundamental form and $\nu$ is the unit normal on $M$.  

 In \cite{simonsjim}, J. Simons showed that the stability of a regular minimal cone $\Sigma^{n+1}=C(M)$ as a hypersurface in $\mathbf{R}^{n+2}$ can be reduced to the condition $\lambda_1(M) \geq -\frac{(n+1)^2}{4}$ on the link $M^n$, considered as a minimal hypersurface in $\mathbf{S}^{n+1}$. He furthermore showed that any smooth closed minimal hypersurface $M^n\subset\mathbf{S}^{n+1}$ that is not an equator must satisfy $\lambda_1 (M) \leq -2n$, which implies that there are no non-flat stable regular minimal cones in $\mathbf{R}^k$, for $k\leq 7$. 

In this short note we extend Simons' estimate to singular minimal hypersurfaces in $\mathbf{S}^{n+1}$: 

\begin{theorem}
\label{thm:stabilityeigsphintro}
Let $V$ be a stationary integral $n$-varifold in $\mathbf{S}^{n+1}$ with orientable regular part, and which satisfies the $\alpha$-structural hypothesis for some $\alpha\in(0,\frac{1}{2})$.

Suppose that $V$ is not totally geodesic in $\mathbf{S}^{n+1}$. Then $\lambda_1(V)\leq -2n$, with equality if and only if $V$ is an integer multiple of a Clifford hypersurface $\mathbf{S}^k\left(\sqrt{\frac{k}{n}}\right)\times \mathbf{S}^l\left(\sqrt{\frac{l}{n}}\right)$, $k+l=n$. 
\end{theorem}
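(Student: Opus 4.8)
The plan is to follow Simons' original variational argument, using the coordinate functions of the ambient sphere as test functions in the Rayleigh quotient, and then to make the integration-by-parts steps rigorous in the singular setting by invoking the $\alpha$-structural hypothesis. Concretely, embed $\mathbf{S}^{n+1}\subset\mathbf{R}^{n+2}$, and for each $a\in\mathbf{R}^{n+2}$ let $f_a = \langle a,\nu\rangle$ be the normal component of $a$ restricted to the regular part $M=\reg V$. Since $V$ is stationary, $\Ric^{\mathbf{S}^{n+1}}(\nu,\nu)=n$, so the Jacobi operator is $L=\Lap_M+|A|^2+n$. A standard computation (valid pointwise on $M$) gives $\Lap_M f_a = -|A|^2 f_a - n f_a + (\text{terms involving }\langle a, \text{tangential data}\rangle)$; more precisely one finds $L f_a = -\langle a^T, \nabla |A|^2\rangle$-type expressions, and summing the Rayleigh quotients $\int_M f_a L f_a$ over an orthonormal basis $\{a_i\}$ of $\mathbf{R}^{n+2}$ collapses the cross terms and yields, at least formally,
\begin{equation}
\sum_i \int_M f_{a_i}\, L f_{a_i} = \int_M \left( |A|^2 + n - |A|^2 \cdot(\text{something}) \right) \le -2n \int_M |A|^2 \cdot (\cdots),
\end{equation}
so that for some choice of $a$ the Rayleigh quotient $\int_M f_a L f_a / \int_M f_a^2$ is at most $-2n$, unless $\sum_i \int_M f_{a_i}^2 = 0$, i.e. unless $V$ is totally geodesic. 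This gives $\lambda_1(V)\le -2n$.

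The first genuine obstacle is that $f_a$ is only an admissible test function for the variational characterization of $\lambda_1$ if it lies in the appropriate weighted Sobolev space on $M$ and if the integrations by parts producing $\int_M f_a L f_a = -\int_M |\nabla f_a|^2 + (|A|^2+n)f_a^2$ carry no boundary contribution from $\sing V$. This is exactly where the $\alpha$-structural hypothesis enters: it controls the size of the singular set (Hausdorff dimension $\le n-1$, with the relevant capacity/measure estimates) so that one can construct logarithmic cutoff functions $\varphi_\varepsilon$ supported away from $\sing V$ with $\int_M |\nabla\varphi_\varepsilon|^2\to 0$, and so that $f_a$ together with $|A|^2$ has enough integrability — coming from the fact that $|A|^2$ blows up no faster than is permitted near an $\alpha$-structural singularity — for the cutoff errors $\int_M \varphi_\varepsilon^2 f_a^2 |A|^2$, $\int_M f_a^2 |\nabla\varphi_\varepsilon|^2$ and the cross term $\int_M \varphi_\varepsilon f_a \langle\nabla\varphi_\varepsilon,\nabla f_a\rangle$ all to vanish in the limit. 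Verifying these integrability and cutoff estimates, and confirming that the resulting $f_a$ is a legitimate competitor (indeed, essentially an eigenfunction once equality is approached), is the technical heart of the argument, and I expect it to be the main place where the precise form of the $\alpha$-structural hypothesis, $\alpha<\tfrac12$, is used.

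For the equality case, suppose $\lambda_1(V)=-2n$. Then every inequality above must be an equality: tracing back, each $f_{a_i}$ must be an eigenfunction of $L$ with eigenvalue $-2n$ on $M$, and the pointwise Cauchy–Schwarz-type inequalities used in the summation must saturate, which forces $|A|^2$ to be constant on $M$ and the second fundamental form to have a rigid algebraic structure (its square is a fixed multiple of a projection), exactly as in the smooth case. One then argues that $|A|^2\equiv n$ and that $M$, being a minimal hypersurface with constant $|A|^2=n$, is (an open dense subset of) a Clifford torus $\mathbf{S}^k(\sqrt{k/n})\times\mathbf{S}^l(\sqrt{l/n})$; this is Simons' identity together with the Chern–do Carmo–Kobayashi / Lawson classification in the rigid case, applied on the smooth part and then extended across $\sing V$ using the structural hypothesis (which, combined with constant $|A|^2$, forces $\sing V=\emptyset$ and $M$ smooth closed). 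Finally a density/constancy argument (the regular part is connected with constant multiplicity, or one sums over components) shows $V$ is an integer multiple of that Clifford hypersurface. The converse — that the Clifford hypersurfaces realize $\lambda_1=-2n$ — is a direct computation with the coordinate functions, which are genuine smooth eigenfunctions there. The subtlety I would flag is ensuring that "equality in the summed Rayleigh quotient" really does descend to "$f_{a_i}$ is an eigenfunction for each $i$" in the weak/singular setting; this requires knowing $-2n$ is attained (or approached by a minimizing sequence that is precompact), which again rests on the structural hypothesis guaranteeing a spectral theory for $L$ on $M$ with discrete bottom eigenvalue.
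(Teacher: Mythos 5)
Your plan starts from the wrong test function, and this is a fatal flaw, not a technical detail to be repaired by the cutoff machinery. For a minimal hypersurface $M^n\subset\mathbf{S}^{n+1}\subset\mathbf{R}^{n+2}$ with unit normal $\nu$, the Codazzi equation (which has no curvature terms in a space form) and minimality give the \emph{exact} identity $\Lap_M\langle a,\nu\rangle=-|A|^2\langle a,\nu\rangle$ for every fixed $a\in\mathbf{R}^{n+2}$; there are no residual ``terms involving $\langle a,\text{tangential data}\rangle$.'' Hence $L\langle a,\nu\rangle=n\langle a,\nu\rangle$, i.e.\ each $f_a$ is an eigenfunction with eigenvalue exactly $-n$, and summing over an orthonormal basis $\{a_i\}$ (using $\sum_i\langle a_i,\nu\rangle^2=1$) merely reproduces the trivial estimate $\lambda_1\le -n$, with no mechanism to reach $-2n$. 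The sharp bound requires the genuinely different test function $\phi=|A|$ combined with Simons' inequality $|A|\Lap|A|\ge\frac{2}{n}|\nabla|A||^2+n|A|^2-|A|^4$, which the paper uses: plugging $|A|$ into the Rayleigh quotient and integrating Simons' inequality directly yields $\lambda_1\int|A|^2\le-2n\int|A|^2$. Your proposal never invokes Simons' inequality, and so cannot produce the factor of $2$.

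Two further points. First, the singular-set control you invoke (``Hausdorff dimension $\le n-1$'') is far too weak for any of the integrations by parts; the paper's route is to combine $\lambda_1>-\infty$ with the $\alpha$-structural hypothesis to run Wickramasekera's regularity theory (Proposition \ref{prop:regularity}), giving $\dim_{\mathcal H}\sing V\le n-7$, and then to use $\mathcal{H}^{n-4}(\sing V)=0$ together with a Schoen--Simon--Yau iteration to show that $|A|\in L^4$ and $\nabla|A|\in L^2$, which is exactly what is needed to justify $|A|$ as an admissible test function and to integrate Simons' inequality against it. Second, the equality case does not require the delicate ``each $f_{a_i}$ is an eigenfunction'' rigidity you describe: equality in (\ref{eq:stabilityeigsph}) forces $\nabla|A|\equiv 0$ and $\int|A|^4=n\int|A|^2$, hence $|A|\equiv\sqrt n$ on $\reg V$, and Lawson's theorem (which does not assume completeness) identifies $\reg V$ as a piece of a Clifford hypersurface, after which the constancy theorem finishes the job. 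Your proposal would need to be rebuilt around $|A|$ and Simons' inequality to have any chance of succeeding.
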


The reader is directed to Section \ref{sec:notation} for the precise definitions. Here $\mathbf{S}^k(r)$ denotes a round sphere of radius $r$. The $\alpha$-structural hypothesis of Wickramasekera \cite{wick} is a technical condition that will allow us to use Wickramasekera's regularity theory to establish sufficient control on the singular set; it is automatically satisfied, for instance, if the singular set initially has vanishing codimension 1 Hausdorff measure. One should also note that the equality case in Theorem \ref{thm:stabilityeigsphintro} was characterised, in the classical setting, independently by Wu \cite{wu} and Perdomo \cite{perdomo}. 

The first key ingredient for Theorem \ref{thm:stabilityeigsphintro} is the famous Simons' inequality, which for minimal hypersurfaces $M^n\subset \mathbf{S}^{n+1}$ states that
\begin{equation}
|{A}|\Lap|{A}|+|{A}|^4 \geq \frac{2}{n}|\nabla |{A}||^2 + n|{A}|^2.
\end{equation}
The method is then to apply $|A|$ as a test function in the variational characterisation of the first stability eigenvalue $\lambda_1$; in the varifold setting the presence of the singular set presents a major issue. To overcome this obstacle, we use a careful choice of cutoff functions that allows us to establish effective $L^2$ estimates for $|A|$ on small balls in $\mathbf{S}^{n+1}$. We then adapt the Schoen-Simon-Yau \cite{SSY} technique to upgrade these estimates to an $L^4$ bound on $|A|$, which establishes that it is a valid test function. 

In \cite{zhu} we have also applied Theorem \ref{thm:stabilityeigsphintro} to show that there are no non-flat entropy-stable stationary cones in $\mathbf{R}^k$, for any dimension $k$. 

In \cite{MR} Morgan and Ritor\'{e} described an alternative cutoff approach that was used to analyse stability of constant mean curvature hypersurfaces and the isoperimetric problem. Their construction is more involved than the one we use but has the benefit of producing smooth cutoff functions with integral control on the Laplacian, although neither of these properties is necessary for our argument. For completeness we present their approach in the appendix. 

Let us now briefly outline the structure of this paper. First, in Section \ref{sec:prelims}, we detail our notation and conventions. In Section \ref{sec:intsing} we describe our choice of cutoff functions and use them to establish some results allowing us to integrate by parts around the singular set. Assuming that $\lambda_1$ is finite, we then prove our integral estimates for the second fundamental form in Section \ref{sec:intest}. Finally, in Section \ref{sec:stabeig} we complete the proof of Theorem \ref{thm:stabilityeigsphintro}.

For some intermediate results, the hypotheses on the singular set may be weakened using the regularity theory for stable minimal hypersurfaces (see Proposition \ref{prop:regularity}), but we will state those results with the stronger hypotheses in order to clarify the dependence on the size of the singular set.


\begin{acknowledgements}
The author would like to thank Prof. Frank Morgan for bringing the paper \cite{MR} to his attention. 

This work was supported in part by the National Science Foundation under grant DMS-1308244.
\end{acknowledgements}

\section{Preliminaries}
\label{sec:prelims}

\subsection{Notation and background}
\label{sec:notation}

\subsubsection{Hypersurfaces}

In this paper a hypersurface will always mean a $C^2$ embedded codimension 1 submanifold in a smooth Riemannian manifold. We mainly consider hypersurfaces $M^n\subset \mathbf{S}^{n+1}$, where $\mathbf{S}^{n+1}$ is the round unit sphere. 

We write $\nabla$ for the connection on $M$ and $\ol{\nabla}$ for the ambient connection. Our convention for the Laplacian is \begin{equation} \Lap_M f = \div_M(\nabla^M f).\end{equation} If $M$ is two-sided, there is a well-defined normal field $\nu$ and we denote by $A$ the second fundamental form of $M$ along $\nu$. We take the mean curvature on $M$ to be \begin{equation}H = \div_M \nu.\end{equation} We say that $M$ is minimal if its mean curvature is zero. Note that since the ambient space $\mathbf{S}^{n+1}$ is orientable, a hypersurface $M^n\subset \mathbf{S}^{n+1}$ is two-sided if and only if it is orientable (see for instance \cite[Chapter 4]{hirsch}). 

We say that a hypersurface $M^n\subset\mathbf{S}^{n+1}$ has Euclidean volume growth if there exists a constant $C_V>0$ so that $\vol(M\cap B_r(x)) \leq C_V r^n$ for any $r>0$ and any $x\in\mathbf{S}^{n+1}$. Here, and henceforth, $B_r(x)$ denotes the geodesic ball of radius $r$ in $\mathbf{S}^{n+1}$ centred at $x$. 

\subsubsection{Varifolds}

In this paper a varifold will always mean an integer rectifiable (integral) varifold. We will consider integral $n$-varifolds $V$ in $\mathbf{S}^{n+1}\subset \mathbf{R}^{n+2}$. We write $\mathcal{H}^k$ for the $k$-dimensional Hausdorff measure in $N$. The reader is directed to \cite{simon} for the basic definitions for varifolds. An integral varifold $V$ is determined by its mass measure, which we denote $\mu_V$. We will always assume that the support $\supp V = \supp\mu_V$ is connected. We define the regular part $\reg V$ to be the set of points $x \in \supp V$ around which $\supp V$ is locally a $C^2$ hypersurface. 
The singular set is then $\sing V = \supp V \setminus \reg V$. 

An integer rectifiable $n$-varifold $V$ has an approximate tangent plane $T_x V$ at $\mu_V$-almost every $x$ in $\supp V$. We may thus define the divergence almost everywhere by \begin{equation} (\div_V X) (x)= \div_{T_x V} X(x) = \sum_{i=1}^n \langle E_i,\ol{\nabla}_{E_i} X\rangle(x)\end{equation} where $E_i$ is an orthonormal basis for $T_x V$ and $\ol{\nabla}$ is the ambient connection. 

For convenience will say that a varifold $V$ is orientable if and only if $\reg V$ is orientable. 

For most of our results we will need some control on the singular set, although we will not assume any such control for now. The weakest condition we will use is the $\alpha$-structural hypothesis of Wickramasekera (\cite{wick}, see also \cite[Section 12]{CMgeneric}): An $n$-varifold $V$ satisfies the $\alpha$-structural hypothesis for some $\alpha\in(0,1)$, if no point of $\sing V$ has a neighbourhood in which $\supp V$ corresponds to the union of at least three embedded $C^{1,\alpha}$ hypersurfaces with boundary that meet only along their common $C^{1,\alpha}$ boundary. Note that the $\alpha$-structural hypothesis is automatically satisfied if, for instance, $\sing V$ has vanishing codimension 1 Hausdorff measure. 

We say that a $n$-varifold $V$ in $\mathbf{S}^{n+1}$ has Euclidean volume growth if there exists a constant $C_V>0$ so that $\mu_V(B_r(x))  \leq C_V r^n$ for any $r>0$ and any $x\in\mathbf{S}^{n+1}$. 

\subsubsection{Stationary varifolds}

An integral $n$-varifold $V$ in $\mathbf{S}^{n+1} \subset \mathbf{R}^{n+2}$ is said to be stationary if \begin{equation} \int \div_V X \d\mu_V = 0\end{equation} for any $C^1$ vector field $X$ on $\mathbf{S}^{n+1}$. In particular the regular part must be minimal in $\mathbf{S}^{n+1}$. 

It follows from the monotonicity formula (see \cite[Section 17.6]{simon}) that any stationary varifold $V$ in $\mathbf{S}^{n+1}$ has Euclidean volume growth. 

We will need the Simons' inequality \cite{simonsjim} for minimal hypersurfaces in $\mathbf{S}^{n+1}$, (see also \cite{SSY}): 
\begin{lemma}
On any minimal hypersurface $M^n\subset \mathbf{S}^{n+1}$ we have that 
\begin{equation}
\label{eq:simonseq}
\Lap |{A}|^2=2|\nabla {A}|^2+2n|{A}|^2-2|{A}|^4.
\end{equation}
Consequently, 
\begin{eqnarray}
\label{eq:simonsineqsph}
|{A}|\Lap|{A}| &=& |\nabla A|^2 - |\nabla |A||^2 + n|A|^2 -|A|^4 \\\nonumber&\geq&  \frac{2}{n}|\nabla |{A}||^2 + n|{A}|^2 - |A|^4.
\end{eqnarray}
\end{lemma}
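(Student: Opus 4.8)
The plan is to deduce \eqref{eq:simonseq} by contracting the Simons identity for the second fundamental form, and then to obtain \eqref{eq:simonsineqsph} from an elementary manipulation together with a refined Kato inequality.

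First I would invoke the Simons identity: on a minimal hypersurface $M^n\subset\mathbf{S}^{n+1}$, commuting covariant derivatives on $A$ and using the Codazzi equations---which in the space form $\mathbf{S}^{n+1}$ say that $\nabla A$ is a totally symmetric $3$-tensor---together with the Gauss equations yields $\Lap A = nA - |A|^2A$. Pairing with $A$ then gives $\tfrac12\Lap|A|^2 = |\nabla A|^2 + \langle A,\Lap A\rangle = |\nabla A|^2 + n|A|^2 - |A|^4$, which is \eqref{eq:simonseq}. At any point where $|A|>0$ (since $|A|^2$ is real-analytic on a minimal hypersurface, either $M$ is totally geodesic or $\{|A|=0\}$ has measure zero, which is all that is needed when \eqref{eq:simonsineqsph} is later used as an inequality between integrands) the function $|A|=\sqrt{|A|^2}$ is smooth and $|A|\,\Lap|A| = \tfrac12\Lap|A|^2 - \tfrac14|A|^{-2}\bigl|\nabla|A|^2\bigr|^2 = \tfrac12\Lap|A|^2 - |\nabla|A||^2$; substituting \eqref{eq:simonseq} gives the equality in \eqref{eq:simonsineqsph}.

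The only remaining step, which I expect to be the main point, is the refined Kato inequality $|\nabla A|^2 - |\nabla|A||^2\ge\tfrac2n|\nabla|A||^2$. I would prove this pointwise at a fixed $p$ with $|A|(p)>0$: choose a local orthonormal frame diagonalising $A$ at $p$, write $h_{ij}$ for the components of $A$ (so $h_{ij}(p)=\lambda_i\delta_{ij}$ with $\sum_i\lambda_i=0$ by minimality and $\sum_i\lambda_i^2=|A|^2$), and put $a_{ijk}=\nabla_kh_{ij}$, which is totally symmetric and satisfies $\sum_ia_{iik}=\nabla_kH=0$ for each $k$. Setting $P=\sum_ia_{iii}^2$ and $R=\sum_{i\ne k}a_{iik}^2$, the inequality reduces to three elementary observations: (i) from $\nabla_k|A|=|A|^{-1}\sum_i\lambda_ia_{iik}$, Cauchy--Schwarz with $\sum_i\lambda_i^2=|A|^2$ gives $|\nabla|A||^2\le\sum_{i,k}a_{iik}^2=P+R$; (ii) grouping the terms of $|\nabla A|^2=\sum_{i,j,k}a_{ijk}^2$ by how many of $i,j,k$ coincide and using total symmetry, $|\nabla A|^2\ge\sum_ia_{iii}^2+3\sum_{i\ne k}a_{iik}^2=P+3R$; (iii) from $a_{kkk}=-\sum_{i\ne k}a_{iik}$, Cauchy--Schwarz gives $a_{kkk}^2\le(n-1)\sum_{i\ne k}a_{iik}^2$, hence $P\le(n-1)R$ after summing over $k$. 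Combining these, $|\nabla A|^2-|\nabla|A||^2\ge(P+3R)-(P+R)=2R$ while $\tfrac2n|\nabla|A||^2\le\tfrac2n(P+R)\le\tfrac2n\bigl((n-1)R+R\bigr)=2R$, which is exactly the claim. Everything except this Kato inequality is bookkeeping; the Kato inequality itself is classical (Schoen--Simon--Yau), and the above is the short self-contained argument I would include.
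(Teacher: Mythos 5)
Your proof is correct and is exactly the standard Schoen--Simon--Yau argument that the paper cites rather than proves: the lemma is attributed to Simons and \cite{SSY}, and your decomposition into $P=\sum_i a_{iii}^2$ and $R=\sum_{i\ne k}a_{iik}^2$, the three estimates $|\nabla|A||^2\le P+R$, $|\nabla A|^2\ge P+3R$, $P\le(n-1)R$, and their combination reproduce that refined Kato inequality faithfully. The remark that $|A|$ is only smooth where $|A|>0$ (and that this set is either all of $M$ or of full measure) is a correct and worthwhile clarification of a point the paper leaves implicit.
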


\subsubsection{Connectedness}

It will be useful to record a connectedness lemma that follows from the varifold maximum principle of Wickramasekera \cite[Theorem 19.1]{wick} together with the proof of \cite[Theorem A(ii)]{ilmanen96max} (see also \cite[Lemma 1.2]{zhu}). 

\begin{lemma}
\label{lem:connectedness}
Let $V$ be a stationary integral $n$-varifold in $\mathbf{S}^{n+1}$. Suppose $\mathcal{H}^{n-1}(\sing V)=0$. Then $\reg V$ is connected if and only if $\supp V$ is connected. 
\end{lemma}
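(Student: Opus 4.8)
The plan is to prove the two implications separately, with the substantive direction being that connectedness of $\supp V$ forces connectedness of $\reg V$. One direction is essentially immediate: if $\reg V$ is connected, then since $\sing V$ has vanishing $\mathcal{H}^{n-1}$-measure (in particular empty interior in $\supp V$), the set $\reg V$ is dense in $\supp V$, and $\supp V$ is contained in the closure of $\reg V$; the closure of a connected set is connected, so $\supp V$ is connected.

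For the forward direction, suppose $\supp V$ is connected but, for contradiction, $\reg V = \mathcal{R}_1 \sqcup \mathcal{R}_2$ splits as a disjoint union of nonempty relatively open sets. First I would note that because $\mathcal{H}^{n-1}(\sing V)=0$, the set $\sing V$ cannot separate $\supp V$ in a naive topological sense — removing a set of codimension $\geq 2$ from a connected set does not disconnect it locally — so the closures $\ol{\mathcal{R}_1}$ and $\ol{\mathcal{R}_2}$ (closures taken in $\supp V$) must intersect, and any point $p$ of intersection necessarily lies in $\sing V$. Near such a $p$, both $\ol{\mathcal{R}_1}$ and $\ol{\mathcal{R}_2}$ are supports of nonzero stationary integral varifolds (the restrictions of $V$), each with closure containing $p$. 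This is exactly the configuration ruled out by the strong maximum principle: I would invoke the varifold maximum principle of Wickramasekera \cite[Theorem 19.1]{wick} together with the argument of \cite[Theorem A(ii)]{ilmanen96max}, which shows that two stationary varifolds whose supports touch must in fact have locally coincident supports near the touching point — contradicting the assumed disjointness of $\mathcal{R}_1$ and $\mathcal{R}_2$ as open subsets of the smooth regular part. (The hypothesis $\mathcal{H}^{n-1}(\sing V)=0$ is what guarantees there is enough regular structure around $p$ to run the touching argument, and it is also what lets us produce a genuine pair of touching stationary pieces rather than a degenerate configuration.)

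Carrying this out requires care at the point where we pass from "the closures meet at a singular point $p$" to "we have two stationary varifolds with touching supports to which the maximum principle applies." The cleanest route is to take a small ball $B_r(p)$, restrict $V$ to it, and decompose the restriction according to the partition of $\reg V$; the condition $\mathcal{H}^{n-1}(\sing V)=0$ ensures each piece remains stationary (no concentration of first variation on the singular set) and has the requisite tangent-cone structure at $p$ for the maximum principle of \cite{wick} to apply. I expect the main obstacle to be precisely this verification — making sure the local decomposition produces honest stationary integral varifolds and that the touching hypothesis of the maximum principle is met — rather than any delicate estimate; once the configuration is set up correctly, the cited results close the argument. This is the same mechanism recorded in \cite[Lemma 1.2]{zhu}, so I would phrase the proof to parallel that reference.
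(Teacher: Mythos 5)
Your proposal is correct and follows essentially the same route the paper takes, namely invoking Wickramasekera's varifold maximum principle together with the argument from Ilmanen's Theorem A(ii) (the paper itself only cites these references and gives no further proof). One small phrasing slip: the intersection $\ol{\mathcal{R}_1}\cap\ol{\mathcal{R}_2}\neq\emptyset$ follows purely from the topology ($\supp V=\ol{\mathcal{R}_1}\cup\ol{\mathcal{R}_2}$ is connected and both closed pieces are nonempty) rather than from any ``codimension $\geq 2$'' removability, and in any case the hypothesis here is $\mathcal{H}^{n-1}(\sing V)=0$, i.e.\ codimension greater than $1$, not $2$.
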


\subsubsection{Stability eigenvalues}

For hypersurfaces $M^{n}$ in $\mathbf{S}^{n+1}$, we will consider the usual stability operator for area given by (recalling that $\mathbf{S}^{n+1}$ has constant Ricci curvature $n$) \begin{equation} {L} = \Lap_M + |{A}|^2 + n. \end{equation} Our convention is that $u$ is an eigenfunction of $L$ with eigenvalue $\lambda$ if $Lu=-\lambda u$. 
For any domain $\Omega \ssubset  M$ we can consider the Dirichlet eigenvalues $\{\lambda_i(\Omega)\}_{i\geq 1}$, and we define the first stability eigenvalue of $M$ to be \begin{equation}\label{eq:eigdefn}\lambda_1(M) = \inf_{\Omega} \lambda_1(\Omega) = \inf_f \frac{\int_M \left(|\nabla^M f|^2-|{A}|^2f^2- n f^2\right)}{\int_M f^2}.\end{equation}
Here the infimum may be taken over Lipschitz functions $f$ with compact support in $M$, although it could be $-\infty$. 

If, however, $\lambda_1=\lambda_1(M)>-\infty$, then we immediately get the stability inequality 
\begin{equation}
\label{eq:stabilityineqsph}
\int_M |{A}|^2 \phi^2 \leq \int_M |\nabla \phi|^2 + (-\lambda_1-n)\int_M \phi^2
\end{equation}
for any $\phi$ with compact support in $M$. It follows easily that $\lambda_1 \leq -n$ with equality iff $M$ is totally geodesic. 

If $V$ is an orientable stationary integral $n$-varifold in $\mathbf{S}^{n+1}$, we set $\lambda_1(V)=\lambda_1(\reg V)$. 

\subsection{Regularity theory}


Here we record a regularity result for stationary varifolds $V$ in $\mathbf{S}^{n+1}$ with $\lambda_1(V)=\lambda_1(M)>-\infty$ that satisfy the $\alpha$-structural hypothesis, where $M=\reg V$. 

This following proposition follows from the regularity theory of Wickramasekera \cite{wick} for stable (that is, $\lambda_1\geq 0$) stationary varifolds, as presented in \cite[Section 12]{CMgeneric}. The key is that the regularity theory goes through even if one only assumes a slightly weaker stability inequality of the form $\int_M |A|^2 \phi^2 \leq (1+\epsilon)\int_M |\nabla\phi|^2$; see for instance \cite[Proposition 12.25]{CMgeneric}, which holds for small balls in any fixed Riemannian manifold. Similarly, a version of \cite[Lemma 12.7]{CMgeneric} holds on any closed ambient manifold, and it holds with any lower bound $\lambda_1(M)>-\infty$ because the $\int_M \phi^2$ term in (\ref{eq:stabilityineqsph}) can be controlled on small balls (by the Poincar\'{e} inequality) to be small relative to the $\int_M |\nabla \phi|^2$ term. 

\begin{proposition}
\label{prop:regularity}
Let $V$ be an orientable stationary $n$-varifold in $\mathbf{S}^{n+1}$, satisfying the $\alpha$-structural hypothesis for some $\alpha\in(0,\frac{1}{2})$. Suppose that $\lambda_1(V) >-\infty$. Then $V$ corresponds to an embedded, analytic hypersurface away from a closed set of singularities of Hausdorff dimension at most $n-7$ (that is empty if $n\leq 6$ and discrete if $n=7$.)
\end{proposition}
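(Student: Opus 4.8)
The plan is to derive Proposition~\ref{prop:regularity} from Wickramasekera's regularity theorem for stable stationary integral varifolds \cite{wick}, in the form presented in \cite[Section 12]{CMgeneric}. The only gap between our hypotheses and that machinery is that we do not assume genuine stability ($\lambda_1 \geq 0$) but only a finite lower bound $\lambda_1(V) > -\infty$. The key observation is that Wickramasekera's proof is local and, on small geodesic balls in a fixed closed Riemannian manifold, only uses a slightly weakened stability inequality of the form $\int_M |A|^2 \phi^2 \leq (1+\epsilon)\int_M |\nabla \phi|^2$ for $\phi$ supported in such a ball. So the first step is to produce such an inequality from (\ref{eq:stabilityineqsph}).

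Concretely, given $\lambda_1 := \lambda_1(V) > -\infty$, the stability inequality (\ref{eq:stabilityineqsph}) on $M = \reg V$ reads $\int_M |A|^2 \phi^2 \leq \int_M |\nabla\phi|^2 + (-\lambda_1 - n)\int_M \phi^2$ for compactly supported $\phi$. I would fix a small radius $\rho_0 > 0$ (depending only on $n$ and the ambient geometry of $\mathbf{S}^{n+1}$, which is fixed) and restrict to $\phi$ supported in a geodesic ball $B_\rho(x)$ with $\rho \leq \rho_0$. On such balls, since $V$ is stationary it has Euclidean volume growth by the monotonicity formula, and the regular part is minimal, so the Michael--Simon Sobolev inequality (or simply a scaled Poincar\'e-type inequality on the minimal hypersurface) gives $\int_M \phi^2 \leq C \rho^2 \int_M |\nabla\phi|^2$ for $\phi$ supported in $B_\rho(x)$, with $C = C(n)$. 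Choosing $\rho_0$ small enough that $(-\lambda_1 - n)_+ \, C \rho_0^2 \leq \epsilon$, where $\epsilon$ is the smallness threshold required by \cite[Proposition~12.25]{CMgeneric}, we obtain $\int_M |A|^2 \phi^2 \leq (1+\epsilon)\int_M |\nabla\phi|^2$ for all $\phi$ supported in balls of radius at most $\rho_0$. Note $\rho_0$ depends on $\lambda_1$, but that is harmless: regularity is a local statement and we only need \emph{some} scale at which the weak stability holds.

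With the weak stability inequality in hand on a uniform (if $\lambda_1$-dependent) scale, the second step is to invoke Wickramasekera's theory directly: the $\alpha$-structural hypothesis with $\alpha \in (0,\tfrac12)$ together with weak stability on small balls implies, by \cite[Theorem~(main regularity theorem)]{wick} as formalised in \cite[Section~12]{CMgeneric} (in particular the analogue of \cite[Lemma~12.7]{CMgeneric} on a closed ambient manifold, combined with \cite[Proposition~12.25]{CMgeneric}), that $\supp V$ is, away from a relatively closed set $\sing V$, an embedded $C^{1,\alpha}$ --- hence by elliptic regularity and minimality, real-analytic --- hypersurface, and that $\dim_{\mathcal H} \sing V \leq n - 7$, with $\sing V$ empty when $n \leq 6$ and discrete when $n = 7$. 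Analyticity follows because an embedded minimal hypersurface in the analytic Riemannian manifold $\mathbf{S}^{n+1}$ that is $C^{1,\alpha}$ solves an analytic elliptic PDE in local graphical coordinates, so standard analytic elliptic regularity applies.

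The main obstacle --- really the only substantive point --- is verifying that Wickramasekera's argument genuinely tolerates the $(1+\epsilon)$-weakened stability inequality rather than the sharp $\epsilon = 0$ version, and that the relevant lemmas are valid on a closed ambient manifold rather than in Euclidean space. This is precisely what is asserted in \cite[Section~12]{CMgeneric}: \cite[Proposition~12.25]{CMgeneric} is stated for the $(1+\epsilon)$-form and for small balls in any fixed Riemannian manifold, and the compactness/excess-decay arguments underlying \cite[Lemma~12.7]{CMgeneric} localise to small balls where the ambient metric is a small perturbation of the Euclidean one. So the proof of Proposition~\ref{prop:regularity} amounts to: (i) pass from (\ref{eq:stabilityineqsph}) to the weak stability inequality on small balls via the Poincar\'e inequality, as above; and (ii) cite the Wickramasekera/\cite{CMgeneric} regularity theorem. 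I would write it in exactly that order, keeping step (i) explicit and step (ii) as a citation.
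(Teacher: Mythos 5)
Your proposal is correct and follows essentially the same route as the paper: use the Poincar\'e inequality on small balls to absorb the $(-\lambda_1-n)\int_M\phi^2$ term and obtain the $(1+\epsilon)$-weakened stability inequality, then invoke Wickramasekera's regularity theory as packaged in \cite[Section 12]{CMgeneric} (Proposition 12.25 and the analogue of Lemma 12.7 on a closed ambient manifold). The paper also notes in passing an alternative derivation via the cone over $V$ being a finite-entropy self-shrinker, but the main argument is exactly yours.
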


Alternatively, one may also recover Proposition \ref{prop:regularity} from the regularity theory for self-shrinkers in \cite{CMgeneric}, by noting that the cone over any stationary varifold in $\mathbf{S}^{n+1}$ is a stationary cone in $\mathbf{R}^{n+2}$, hence a self-shrinker, with finite entropy (see for instance \cite{zhu}).

\section{Integration on singular hypersurfaces}
\label{sec:intsing}

In this section we present some technical results that will allow us to work on the regular part of an integral varifold with sufficiently small singular set.

 \subsection{Cutoff functions}
 \label{sec:cutoff}

We first detail our choice of cutoff functions that will allow us to integrate around the singular set, so long as that set is small enough.

Consider an integral $n$-varifold $V$ in $\mathbf{S}^{n+1}$. The singular set is closed and hence compact. So by definition of Hausdorff measure, if $\mathcal{H}^{n-q}(\sing V)=0$, then for any $\epsilon>0$ we may cover the singular set by finitely many geodesic balls of $\mathbf{S}^{n+1}$, $\sing V \subset \bigcup_{i=1}^m {B}_{r_i}(p_i)$, where $\sum_i r_i^{n-q} <\epsilon$ and we may assume without loss of generality that $r_i<1$ for each $i$. 

Given such a covering we take smooth cutoff functions $0\leq \phi_{i,\epsilon}\leq 1$ on $\mathbf{S}^{n+1}$ with $\phi_{i,\epsilon}=1$ outside ${B}_{2r_i}(p_i)$, $\phi_{i,\epsilon}=0$ inside ${B}_{r_i}(p_i)$ and $|\ol{\nabla} \phi_{i,\epsilon}| \leq \frac{2}{r_i}$ in between. We then set $\phi_\epsilon = \inf_i \phi_{i,\epsilon}$, which is Lipschitz with compact support away from $\sing V$,  and $|\ol{\nabla} \phi_\epsilon| \leq \sup_i |\ol{\nabla} \phi_{i,\epsilon}|$. 

\subsection{Integration by parts}

In this section $M^n$ will denote the regular part of an integral $n$-varifold $V$ in $\mathbf{S}^{n+1}$ with Euclidean volume growth $\mu_V(B_r(x)) \leq C_V r^n$. The main goal is to establish conditions under which integration by parts is justified on $M$. Henceforth, $L^p = L^p(M)$ and $W^{k,p}=W^{k,p}(M)$ will denote the usual function spaces on $M$. 

\begin{lemma}
\label{lem:gradconvsph}
Suppose that $\mathcal{H}^{n-q}(\sing V)=0$ for some $q>0$. Let $\phi_\epsilon$ be as in Section \ref{sec:cutoff}. Then on $M=\reg V$ we have the following gradient estimate:
\begin{equation}\int_M |\nabla \phi_\epsilon|^q  \leq 2^{n+q}C_V\epsilon.\end{equation} 
\end{lemma}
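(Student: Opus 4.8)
The plan is to estimate $\int_M |\nabla\phi_\epsilon|^q$ by localizing to the balls where $\phi_\epsilon$ is non-constant and exploiting the explicit gradient bound on each cutoff together with the Euclidean volume growth of $V$. First I would observe that $\phi_\epsilon = \inf_i \phi_{i,\epsilon}$ is constant (equal to $1$, or to $0$, or locally equal to some $\phi_{j,\epsilon}$) outside $\bigcup_i B_{2r_i}(p_i)$, so $\nabla\phi_\epsilon$ vanishes there. On the set where it does not vanish, at $\mu_V$-a.e.\ point $x$ the infimum is attained by some index $i$ with $x\in B_{2r_i}(p_i)$, and there $|\nabla\phi_\epsilon|(x) \le |\ol\nabla\phi_{i,\epsilon}|(x) \le \frac{2}{r_i}$ (the tangential gradient on $M$ is bounded by the ambient gradient, and the infimum of functions has gradient bounded by the gradient of whichever function realizes it a.e.). Hence pointwise $|\nabla\phi_\epsilon| \le \sup_i |\ol\nabla\phi_{i,\epsilon}| \cdot \mathbf{1}_{\bigcup_i B_{2r_i}(p_i)}$, and more usefully, on $B_{2r_i}(p_i) \setminus B_{r_i}(p_i)$ we have the bound $\frac{2}{r_i}$ coming from the $i$-th cutoff.

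Next I would crudely bound the integral by summing over the covering balls:
\begin{equation}
\int_M |\nabla\phi_\epsilon|^q \le \sum_{i=1}^m \int_{M \cap B_{2r_i}(p_i)} |\nabla\phi_\epsilon|^q \le \sum_{i=1}^m \left(\frac{2}{r_i}\right)^q \mu_V(B_{2r_i}(p_i)).
\end{equation}
Here the first inequality uses that $\{|\nabla\phi_\epsilon|\ne 0\} \subset \bigcup_i B_{2r_i}(p_i)$, and the second uses the pointwise gradient bound on $B_{2r_i}(p_i)$ — strictly, $|\nabla\phi_\epsilon| \le \frac{2}{r_i}$ holds on $B_{2r_i}(p_i)$ since wherever the infimum is realized by an index $j \ne i$ with $x \in B_{2r_j}(p_j)$, one can instead note $|\nabla\phi_\epsilon|(x) \le \frac{2}{r_j}$; to make the per-ball bound $\frac{2}{r_i}$ legitimate one should either argue that on $B_{2r_i}$ the value of $\phi_\epsilon$ only changes because some cutoff is changing there, or more simply assign each point $x$ to \emph{an} index $i(x)$ realizing the infimum with non-vanishing gradient and bound by $\frac{2}{r_{i(x)}}$, partitioning $M$ accordingly — either way one lands on a sum of terms $r_i^{-q}\mu_V(B_{2r_i}(p_i))$. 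Then applying Euclidean volume growth, $\mu_V(B_{2r_i}(p_i)) \le C_V (2r_i)^n = 2^n C_V r_i^n$, gives
\begin{equation}
\int_M |\nabla\phi_\epsilon|^q \le \sum_{i=1}^m 2^q r_i^{-q} \cdot 2^n C_V r_i^n = 2^{n+q} C_V \sum_{i=1}^m r_i^{n-q} < 2^{n+q} C_V \epsilon,
\end{equation}
where the last step is precisely the defining property $\sum_i r_i^{n-q} < \epsilon$ of the covering.

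The main obstacle is the bookkeeping around the infimum: making rigorous that the per-ball gradient bound $\frac{2}{r_i}$ is the right one to use, i.e.\ that one does not double-count or pick up cross terms from overlapping balls. The clean way to handle this is to define, for $\mu_V$-a.e.\ $x$ with $\nabla\phi_\epsilon(x)\ne 0$, an index $i(x)$ at which the infimum $\phi_\epsilon(x) = \phi_{i(x),\epsilon}(x)$ is attained; then $x \in B_{2r_{i(x)}}(p_{i(x)}) \setminus B_{r_{i(x)}}(p_{i(x)})$ (since $\phi_{i(x),\epsilon}$ is constant off this annulus) and $|\nabla\phi_\epsilon|(x) \le |\ol\nabla\phi_{i(x),\epsilon}|(x) \le \frac{2}{r_{i(x)}}$. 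Partitioning $M$ by the value of $i(x)$, the contribution of index $i$ is supported in $B_{2r_i}(p_i)$ with integrand at most $(2/r_i)^q$, giving exactly the sum above. Everything else — the gradient-of-infimum inequality and the tangential-versus-ambient gradient inequality — is standard and already implicit in the construction in Section~\ref{sec:cutoff}.
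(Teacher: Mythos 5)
Your proof is correct and matches the paper's argument: partition the support of $\nabla\phi_\epsilon$ according to which index realizes the infimum, bound the gradient by $2/r_i$ on the piece assigned to $i$, and then apply Euclidean volume growth and $\sum_i r_i^{n-q}<\epsilon$. The paper simply writes down the resulting per-annulus bound directly (using $B_{2r_i}\setminus B_{r_i}$ rather than the full ball, which makes no difference to the constant), so your careful discussion of the partitioning is exactly the unstated justification of the paper's first inequality.
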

\begin{proof}
We have \begin{eqnarray}
\int_M |\nabla \phi_\epsilon|^{q} &\leq&  \sum_{i=1}^N \int_{M \cap {B}_{2r_i}(p_i) \setminus {B}_{r_i}(p_i)} \frac{2^{q}}{r_i^{q}} \\\nonumber&\leq& 2^{n+q}C_V\sum_{i} r_i^{n-q} \leq 2^{n+q}C_V\epsilon.\end{eqnarray} 
\end{proof}

\begin{corollary}
\label{cor:fgradsph}
Assume that $\mathcal{H}^{n-q}(\sing V)=0$ for some $q$ and let $M=\reg V$. Let $\phi_\epsilon$ be as in Section \ref{sec:cutoff}.
\begin{enumerate}
\item Suppose that $q\geq 1$ and that $f$ is $L^p$ on $M$, where $p = \frac{q}{q-1}$. Then \begin{equation}\lim_{\epsilon\rightarrow 0} \int_M |f||\nabla \phi_\epsilon| = 0.\end{equation} 
\item Suppose that $q\geq 2$ and that $f$ is $L^p$ on $M$, where $p= \frac{2q}{q-2}$. Then \begin{equation}\lim_{\epsilon\rightarrow 0} \int_M f^2 |\nabla \phi_\epsilon|^2 = 0.\end{equation} 

\end{enumerate}
\end{corollary}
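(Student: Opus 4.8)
### Plan for proving Corollary \ref{cor:fgradsph}

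\textbf{Overview.} Both parts follow directly from Lemma \ref{lem:gradconvsph} combined with H\"older's inequality, exploiting that each cutoff function $\phi_\epsilon$ has support of its gradient concentrated on the small annuli $B_{2r_i}(p_i)\setminus B_{r_i}(p_i)$, so the relevant $L^p$ mass of $f$ over that region shrinks to zero. The only subtlety is bookkeeping the H\"older exponents so that the gradient term is controlled by the $\|\nabla\phi_\epsilon\|_{L^q}$ bound of Lemma \ref{lem:gradconvsph}.

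\textbf{Part (1).} Let $U_\epsilon = \{\nabla\phi_\epsilon \neq 0\} \subset M$; by construction $U_\epsilon \subset \bigcup_i \big(B_{2r_i}(p_i)\setminus B_{r_i}(p_i)\big)$, and since $\phi_\epsilon \to 1$ pointwise on $M$ (indeed $\phi_\epsilon = 1$ outside $\bigcup_i B_{2r_i}(p_i)$ and $\sum_i r_i^{n-q} < \epsilon \to 0$), the characteristic functions $\chi_{U_\epsilon} \to 0$ pointwise $\mu_V$-a.e. on $M$. With $p = \frac{q}{q-1}$, so that $\frac1p + \frac1q = 1$, H\"older's inequality gives
\begin{equation*}
\int_M |f||\nabla\phi_\epsilon| = \int_{U_\epsilon} |f||\nabla\phi_\epsilon| \leq \left(\int_{U_\epsilon} |f|^p\right)^{1/p}\left(\int_M |\nabla\phi_\epsilon|^q\right)^{1/q} \leq (2^{n+q}C_V\epsilon)^{1/q}\left(\int_{U_\epsilon}|f|^p\right)^{1/p}.
\end{equation*}
Since $f \in L^p(M)$ and $\chi_{U_\epsilon}\to 0$ a.e., the dominated convergence theorem gives $\int_{U_\epsilon}|f|^p \to 0$; in particular this factor is bounded, and the prefactor $(2^{n+q}C_V\epsilon)^{1/q} \to 0$, so the product tends to $0$.

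\textbf{Part (2).} This is identical in structure. With $q \geq 2$ and $p = \frac{2q}{q-2}$, one checks $\frac{2}{p} + \frac{2}{q} = 1$, so H\"older's inequality with exponents $\frac{p}{2}$ and $\frac{q}{2}$ yields
\begin{equation*}
\int_M f^2|\nabla\phi_\epsilon|^2 = \int_{U_\epsilon} f^2 |\nabla\phi_\epsilon|^2 \leq \left(\int_{U_\epsilon}|f|^p\right)^{2/p}\left(\int_M |\nabla\phi_\epsilon|^q\right)^{2/q} \leq (2^{n+q}C_V\epsilon)^{2/q}\left(\int_{U_\epsilon}|f|^p\right)^{2/p},
\end{equation*}
and again $\int_{U_\epsilon}|f|^p\to 0$ by dominated convergence while $(2^{n+q}C_V\epsilon)^{2/q}\to 0$, so the left side vanishes in the limit.

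\textbf{Main obstacle.} There is no real obstacle here; this is a routine consequence of Lemma \ref{lem:gradconvsph}. The only point requiring a moment's care is justifying $\int_{U_\epsilon}|f|^p \to 0$ rather than merely staying bounded --- this is where one invokes that $\mu_V(U_\epsilon) \to 0$ (again from $\sum_i r_i^n \leq \sum_i r_i^{n-q} < \epsilon$ together with Euclidean volume growth, since $r_i < 1$) so that dominated convergence applies to the fixed $L^p$ function $f$. Even without that refinement, boundedness of $\int_M |f|^p$ suffices to conclude, since the prefactor already goes to zero.
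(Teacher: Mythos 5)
Your proof is correct and takes essentially the same approach as the paper: H\"older's inequality with the stated exponents applied against the $L^q$ bound on $\nabla\phi_\epsilon$ from Lemma \ref{lem:gradconvsph}. The extra refinement via dominated convergence showing $\int_{U_\epsilon}|f|^p\to 0$ is harmless but unnecessary, as you yourself note in the final paragraph; the paper simply uses the fixed global norm $\|f\|_p$.
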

\begin{proof}
For (1), using H\"{o}lder's inequality, we have
\begin{equation}
\int_M |f| |\nabla \phi_\epsilon|  \leq \|f\|_p \left( \int_M |\nabla \phi_\epsilon|^{q}\right)^\frac{1}{q}
\end{equation}
where $\frac{1}{p}+\frac{1}{q}=1$. 

Similarly for (2) we have
\begin{equation}
\int_M f^2 |\nabla \phi_\epsilon|^2  \leq \|f\|^2_p\left( \int_M |\nabla \phi_\epsilon|^{q}\right)^\frac{2}{q}
\end{equation}
where $\frac{2}{p}+\frac{2}{q}=1$. 

By supposition the $L^p$-norms of $f$ are finite, so both results now follow from Lemma \ref{lem:gradconvsph}. 
\end{proof}


\begin{lemma}
\label{lem:ibpsph}
Suppose $\mathcal{H}^{n-q}(\sing V)=0$ for some $q\geq 1$. Further suppose that $u,v$ are $C^2$ functions on $M=\sing V$ such that $|\nabla u||\nabla v|$ and $|u\Lap v|$ are $L^1$, and $|u\nabla v|$ is $L^p$, $p= \frac{q}{q-1}$. Then \begin{equation} 
\int_M u\Lap v  = -\int_M \langle \nabla u,\nabla v\rangle .\end{equation}
\end{lemma}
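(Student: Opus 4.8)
The plan is to prove this by a cutoff-and-limit argument, integrating by parts on the open manifold $M=\reg V$ against the cutoff functions $\phi_\epsilon$ from Section \ref{sec:cutoff} and then passing $\epsilon\to 0$. Since $\phi_\epsilon$ is Lipschitz with compact support in $M$ and $M$ is a genuine $C^2$ (indeed, by minimality, smooth) Riemannian manifold without boundary away from $\sing V$, the classical divergence theorem applies to the compactly supported Lipschitz vector field $\phi_\epsilon^2\, u\,\nabla v$ (or one may use $\phi_\epsilon u \nabla v$; squaring the cutoff is a convenient way to keep the product rule clean, but either works). This yields
\begin{equation}
\int_M \phi_\epsilon^2\, u\, \Lap v = -\int_M \phi_\epsilon^2 \langle \nabla u,\nabla v\rangle - 2\int_M \phi_\epsilon\, u\, \langle \nabla\phi_\epsilon,\nabla v\rangle,
\end{equation}
with no boundary term. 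The goal is then to show each of the three integrals converges, as $\epsilon\to 0$, to the corresponding integral without the cutoff, and that the last term vanishes in the limit.

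For the first two terms, note $0\le\phi_\epsilon\le 1$ and $\phi_\epsilon\to 1$ pointwise $\mu_V$-a.e.\ on $M$ (the complement of where it fails is contained in the shrinking cover of $\sing V$, which has measure tending to zero). By hypothesis $|u\Lap v|\in L^1(M)$ and $|\nabla u||\nabla v|\in L^1(M)$, so dominated convergence gives
\begin{equation}
\int_M \phi_\epsilon^2\, u\, \Lap v \to \int_M u\,\Lap v, \qquad \int_M \phi_\epsilon^2 \langle\nabla u,\nabla v\rangle \to \int_M \langle\nabla u,\nabla v\rangle.
\end{equation}
For the cross term, bound $\left|2\int_M \phi_\epsilon\, u\,\langle\nabla\phi_\epsilon,\nabla v\rangle\right| \le 2\int_M |u\,\nabla v|\,|\nabla\phi_\epsilon|$, since $0\le\phi_\epsilon\le 1$. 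Now apply Corollary \ref{cor:fgradsph}(1) with $f = |u\,\nabla v|$, which is $L^p$ for $p=\frac{q}{q-1}$ by hypothesis: this gives $\int_M |u\,\nabla v|\,|\nabla\phi_\epsilon|\to 0$. Hence the cross term vanishes in the limit, and combining the three convergences proves the identity.

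The main subtlety — really the only place care is needed — is the justification of the boundary-term-free divergence theorem for the Lipschitz field $\phi_\epsilon^2 u\nabla v$ on $M$. Here one uses that $\phi_\epsilon$ has compact support in $\mathbf{S}^{n+1}\setminus\sing V$, so its restriction to $M$ has compact support inside the smooth manifold $M$; since $u,v\in C^2$, the field $\phi_\epsilon^2 u\nabla v$ is Lipschitz and compactly supported, and $\div_M(\phi_\epsilon^2 u\nabla v) = \phi_\epsilon^2(u\Lap v + \langle\nabla u,\nabla v\rangle) + 2\phi_\epsilon u\langle\nabla\phi_\epsilon,\nabla v\rangle$ holds a.e., with all terms integrable (the first two by the $L^1$ hypotheses, the third because it is supported in the annular regions where $|\nabla\phi_\epsilon|$ is bounded and $\mu_V$ has Euclidean volume growth). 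A technical point worth a remark: although $\sing V$ is only assumed to have $\mathcal{H}^{n-q}(\sing V)=0$, the set $M$ is an honest open submanifold and $\phi_\epsilon$ genuinely has compact support away from $\sing V$, so no capacity-type argument beyond the gradient estimate of Lemma \ref{lem:gradconvsph} is needed. (One should also note the statement's hypothesis ``$M = \sing V$'' is a typo for $M = \reg V$.)
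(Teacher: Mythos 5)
Your proof is correct and follows essentially the same approach as the paper: integrate by parts against the cutoff $\phi_\epsilon$ (the paper uses $\phi_\epsilon$ directly rather than $\phi_\epsilon^2$, but as you note this is a cosmetic choice), then use Corollary \ref{cor:fgradsph}(1) to kill the cross term and dominated convergence for the other two. You also correctly identify the ``$M=\sing V$'' typo for $M=\reg V$.
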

\begin{proof}
If $\phi$ has compact support we may integrate by parts to get \begin{equation}
\int_M \phi u\Lap v = -\int_M\phi \langle \nabla u,\nabla v\rangle - \int_M   u\langle \nabla v,\nabla \phi\rangle. \end{equation} 

Applying this to $\phi= \phi_\epsilon$, Corollary \ref{cor:fgradsph} gives that the second term on the right tends to zero as $\epsilon\rightarrow 0$, so the result follows by dominated convergence.

\end{proof}

\section{Integral estimates for $|{A}|$}
\label{sec:intest}

Throughout this section $M^n$ will denote the regular part of an orientable stationary $n$-varifold $V$ in $\mathbf{S}^{n+1}$, which satisfies $\lambda_1=\lambda_1(V)>-\infty$. Recall that $V$ automatically has Euclidean volume growth, and that the stability inequality 
\begin{equation}
\label{eq:stabilitysph}
\int_M |{A}|^2 \phi^2 \leq \int_M |\nabla \phi|^2 + (-\lambda_1-n)\int_M \phi^2
\end{equation}
holds for any $\phi$ with compact support in $M$. The goals of this section are to provide $L^2$ estimates for the second fundamental form $A$ on small balls, and to use these estimates to show that $|A|$ is $L^4$ on $M$. 

\begin{lemma}
\label{lem:AL2sph}
Suppose that $\mathcal{H}^{n-2}(\sing V)=0$ and let $M=\reg V$. If $\lambda_1>-\infty$, then $|{A}|$ is $L^2$ on $M$.\end{lemma}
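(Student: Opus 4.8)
The plan is to test the stability inequality (\ref{eq:stabilitysph}) against $\phi = \phi_\epsilon$, which has compact support in $M$ since $\phi_\epsilon$ vanishes near $\sing V$. This immediately gives
\begin{equation*}
\int_M |A|^2 \phi_\epsilon^2 \leq \int_M |\nabla \phi_\epsilon|^2 + (-\lambda_1 - n)\int_M \phi_\epsilon^2.
\end{equation*}
The first term on the right is controlled by Lemma \ref{lem:gradconvsph} with $q=2$: since $\mathcal{H}^{n-2}(\sing V)=0$, we have $\int_M |\nabla\phi_\epsilon|^2 \leq 2^{n+2} C_V \epsilon$, which is bounded (indeed $\to 0$) as $\epsilon \to 0$. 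The second term is controlled by the Euclidean volume growth: $\int_M \phi_\epsilon^2 \leq \mu_V(\mathbf{S}^{n+1}) = \mu_V(B_\pi(x)) \leq C_V \pi^n < \infty$, uniformly in $\epsilon$. (If $-\lambda_1 - n < 0$ one may simply drop this term.) Hence the right-hand side is bounded uniformly in $\epsilon$ by a constant depending only on $n$, $C_V$, and $\lambda_1$.

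The conclusion then follows by monotone (or Fatou's) convergence. As $\epsilon \to 0$, the cutoffs $\phi_\epsilon$ can be arranged to increase to $1$ pointwise on $M$ — or at the very least $\phi_\epsilon^2 \to 1$ pointwise $\mu_V$-a.e. on $M$, since every point of $M = \reg V$ lies at positive distance from the compact set $\sing V$ and so eventually lies outside all the balls $B_{2r_i}(p_i)$ once $\epsilon$ is small enough (the radii $r_i$ shrink since $\sum r_i^{n-2} < \epsilon$). Therefore
\begin{equation*}
\int_M |A|^2 = \int_M |A|^2 \lim_{\epsilon \to 0} \phi_\epsilon^2 \leq \liminf_{\epsilon\to 0}\left(\int_M |\nabla\phi_\epsilon|^2 + (-\lambda_1-n)\int_M\phi_\epsilon^2\right) < \infty,
\end{equation*}
so $|A| \in L^2(M)$.

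The only subtlety — and the main thing to be careful about — is the pointwise convergence $\phi_\epsilon \to 1$ on $M$, which must be extracted from the construction in Section \ref{sec:cutoff} rather than assumed: one needs that for a fixed $x \in \reg V$, for all sufficiently small $\epsilon$ the point $x$ is not covered by $\bigcup_i B_{2r_i}(p_i)$. This holds because $\operatorname{dist}(x, \sing V) =: d > 0$, while for $\epsilon$ small every ball in the covering has radius $r_i < (d/4)$ say (as $\sum_i r_i^{n-2} < \epsilon$ forces each $r_i$ small, using $n \geq 2$; the case $n=2$ or the degenerate cases can be handled directly since then $\sing V = \emptyset$ and there is nothing to prove), and the balls $B_{r_i}(p_i)$ must be centred within $\operatorname{dist} < r_i$ of $\sing V$, hence $B_{2r_i}(p_i)$ stays within distance $3r_i < d$ of $\sing V$ and misses $x$. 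Thus $\phi_\epsilon(x) = 1$ for $\epsilon$ small, giving the needed convergence. Everything else is a direct application of the stationarity (for volume growth) and the finiteness of $\lambda_1$ (for the stability inequality).
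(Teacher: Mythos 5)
Your proof is correct and follows essentially the same route as the paper: plug $\phi_\epsilon$ into the stability inequality, control the gradient term via Lemma \ref{lem:gradconvsph} and the zeroth-order term via the global volume bound, then pass to the limit by Fatou. The extra care you take to verify the pointwise convergence $\phi_\epsilon \to 1$ on $\reg V$ is a worthwhile elaboration of a step the paper leaves implicit, but it is not a different argument.
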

\begin{proof}
Let $\phi_\epsilon$ be as in Section \ref{sec:intsing}. Plugging $\phi_\epsilon$ into the stability inequality (\ref{eq:stabilitysph}), the last term is bounded by the volume of $M$ since $\phi_\epsilon^2\leq 1$. The gradient term is controlled by Lemma \ref{lem:gradconvsph}, so the result follows by Fatou's lemma as we take $\epsilon\rightarrow 0$.
\end{proof}

\begin{lemma}
\label{lem:AL2locsph}
Suppose that $\mathcal{H}^{n-2}(\sing V)=0$ and let $M=\reg V$. Further suppose that $\lambda_1>-\infty$. Then there exists $C=C(n,V,\lambda_1)$ so that for any $r\in(0,2)$, $p\in \mathbf{S}^{n+1}$, we have \begin{equation}
\int_{M \cap {B}_r(p)} |{A}|^2 \leq C r^{n-2}.\end{equation}
\end{lemma}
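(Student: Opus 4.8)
The plan is to obtain the local $L^2$ estimate for $|A|$ by testing the stability inequality (\ref{eq:stabilitysph}) against a product cutoff of the form $\phi = \eta\,\phi_\epsilon$, where $\eta$ is a standard spatial cutoff supported in $B_{2r}(p)$ with $\eta \equiv 1$ on $B_r(p)$ and $|\ol\nabla\eta|\leq C/r$, and $\phi_\epsilon$ is the singular-set cutoff from Section \ref{sec:cutoff}. Plugging $\phi = \eta\phi_\epsilon$ into (\ref{eq:stabilitysph}) and expanding $|\nabla(\eta\phi_\epsilon)|^2 \leq 2\eta^2|\nabla\phi_\epsilon|^2 + 2\phi_\epsilon^2|\nabla\eta|^2$, the term with $|\nabla\phi_\epsilon|^2$ is controlled by Lemma \ref{lem:gradconvsph} with $q=2$ (it goes to $0$ as $\epsilon\to0$ since $\eta\leq1$ and $\eta$ is supported in a fixed ball, so only finitely many covering balls matter), the term with $|\nabla\eta|^2$ is bounded by $C r^{-2}\mu_V(B_{2r}(p)) \leq C C_V r^{n-2}$ using Euclidean volume growth, and the $(-\lambda_1-n)\int \eta^2\phi_\epsilon^2$ term is bounded by $|-\lambda_1-n|\,\mu_V(B_{2r}(p)) \leq C r^n \leq C r^{n-2}$ for $r<2$.

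The key steps in order: first, fix $r\in(0,2)$ and $p$, and choose the spatial cutoff $\eta$ on $\mathbf{S}^{n+1}$. Second, for each $\epsilon>0$ form $\phi = \eta\phi_\epsilon$, which is Lipschitz with compact support in $M$, and substitute into (\ref{eq:stabilitysph}). Third, estimate the right-hand side uniformly in $\epsilon$ as described above, so that $\int_{M} \eta^2\phi_\epsilon^2 |A|^2 \leq C(n,V,\lambda_1) r^{n-2}$ with $C$ independent of $\epsilon$; here one should be mildly careful that the constant from Lemma \ref{lem:gradconvsph} is absorbed into an $o(1)$ term as $\epsilon\to0$ rather than contributing to $C$. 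Fourth, send $\epsilon\to0$ and apply Fatou's lemma (using $\phi_\epsilon\nearrow 1$ pointwise on $M$, or at least $\liminf$) to conclude $\int_{M\cap B_r(p)} |A|^2 \leq \int_M \eta^2|A|^2 \leq C r^{n-2}$.

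The main obstacle, though a minor one, is bookkeeping the dependence of the constant: one must check that passing to $\epsilon\to0$ genuinely kills the $|\nabla\phi_\epsilon|^2$ contribution without leaving an $\epsilon$-dependent or $r$-dependent residue — this is exactly Corollary \ref{cor:fgradsph}(2) applied with $f = \eta|A|$ cross-terms, or more simply the observation that for fixed $r$ the ball $B_{2r}(p)$ meets only finitely many of the covering balls $B_{r_i}(p_i)$ and $\int_M \eta^2|\nabla\phi_\epsilon|^2 \leq \int_M |\nabla\phi_\epsilon|^2 \to 0$. The only other point requiring a word is that $|A|$ is already known to be $L^2$ globally by Lemma \ref{lem:AL2sph}, which guarantees all the integrals in sight are finite and legitimizes the use of Fatou and dominated convergence; otherwise the argument is a routine localization of the proof of Lemma \ref{lem:AL2sph}.
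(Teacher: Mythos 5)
Your proposal is correct and follows essentially the same route as the paper: test the stability inequality with the product cutoff $\eta\phi_\epsilon$, split the gradient term via Cauchy--Schwarz, kill $\int\eta^2|\nabla\phi_\epsilon|^2$ as $\epsilon\to 0$ using Lemma \ref{lem:gradconvsph} with $q=2$, bound $\int\phi_\epsilon^2|\nabla\eta|^2$ and the zeroth-order term by volume growth, and pass to the limit using $|A|\in L^2$ from Lemma \ref{lem:AL2sph}. The only cosmetic difference is that the paper passes to the limit via dominated convergence up front (legitimized by $|A|\in L^2$) rather than Fatou at the end; both are valid.
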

\begin{proof}
First we fix a cutoff function $0\leq \eta \leq 1$ so that $\eta=1$ inside ${B}_r(p)$, $\eta=0$ outside ${B}_{2r}(p)$ and $|\ol{\nabla} \eta | \leq \frac{2}{r}$ in between. Then \begin{equation} \int_{M \cap {B}_r(p)} |{A}|^2 \leq \int_{M} |{A}|^2\eta^2.\end{equation}

Since by Lemma \ref{lem:AL2sph}, $|{A}|$ is $L^2$, these integrals are finite and using dominated convergence we may approximate 
\begin{equation}
\int_M |{A}|^2 \eta^2  =\lim_{\epsilon\rightarrow 0} \int_{M} |{A}|^2 \eta^2 \phi_\epsilon^2.
\end{equation}

Now using the stability inequality (\ref{eq:stabilitysph}), for each $\epsilon>0$ we have
\begin{equation} 
\int_M |{A}|^2 \eta^2 \phi_\epsilon^2\leq 2\int_M \eta^2|\nabla \phi_\epsilon|^2 +2\int_M\phi_\epsilon^2|\nabla \eta|^2+\alpha \int_M \eta^2 \phi_\epsilon^2 ,
\end{equation}
where we have set $\alpha = |-\lambda_1-n|$. 

Since $\eta^2\leq 1$, by Lemma \ref{lem:gradconvsph} the first term tends to zero as $\epsilon\rightarrow 0$, where we have used that $\mathcal{H}^{n-2}(\sing V)=0$. Then since also $\phi_\epsilon^2\leq 1$, we have \begin{equation}\int_M \eta^2 \phi_\epsilon^2 \leq \int_M \eta^2 \leq \int_{M \cap {B}_{2r}(p)} 1 \leq 2^{n} C_V r^{n} ,\end{equation} and 
\begin{equation}
\int_M  \phi_\epsilon^2 |\nabla \eta|^2 \leq \int_M |\nabla \eta|^2 \leq \int_{M \cap {B}_{2r}(p)} \frac{4}{r^2}\leq 2^{n+2} C_V r^{n-2},
\end{equation}
so the result follows.
\end{proof}

\begin{lemma}
\label{lem:ssysph}
Suppose that $\mathcal{H}^{n-4}(\sing V)=0$ and let $M=\reg V$. If $\lambda_1>-\infty$, then $|{A}|$ is $L^4$, and $|\nabla|{A}||$ and $|\nabla {A}|^2$ are $L^2$, on $M$. 
\end{lemma}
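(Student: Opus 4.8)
The plan is to adapt the Schoen--Simon--Yau iteration to the singular setting, using the Simons inequality (\ref{eq:simonsineqsph}) together with the local $L^2$ bound from Lemma \ref{lem:AL2locsph} and the cutoff functions $\phi_\epsilon$ of Section \ref{sec:cutoff}. The starting point is the differential inequality $|A|\Lap|A| \geq \frac{2}{n}|\nabla|A||^2 + n|A|^2 - |A|^4$. Setting $f = |A|^{1+q}$ for a small parameter $q>0$ to be chosen, one computes $\Lap f$ and, after testing against $f\varphi^2$ for a compactly supported cutoff $\varphi$ (here $\varphi$ will be a product $\eta\phi_\epsilon$ of an ordinary cutoff $\eta$ on a ball with the singular-set cutoff $\phi_\epsilon$), integrates by parts. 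The point of the Simons inequality is that the ``bad'' gradient term $|\nabla|A||^2$ comes with a favorable sign once $q$ is small, so it can be absorbed; the price is a term like $\int |A|^{4+2q}\varphi^2$, which must be controlled by $\left(\int_{\supp\varphi}|A|^2\right)$ times a small constant. This is exactly where Lemma \ref{lem:AL2locsph} enters: on a ball $B_r$ of sufficiently small radius, $\int_{M\cap B_r}|A|^2 \leq Cr^{n-2}$, and by the Michael--Simon Sobolev inequality on $M$ (valid since $V$ is stationary hence has the required monotonicity/volume bounds) one converts $\int |A|^{2+2q}\varphi^2$-type quantities into $\left(\int|A|^2\right)^{1/n}$-type factors, which are small on small balls. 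Choosing $r$ small and $q$ small, the reverse H\"older / absorption step then yields a bound $\int_{M\cap B_{r/2}} |A|^{4+2q} \leq C$, and in particular $\int_{M\cap B_{r/2}}|A|^4 \leq C$; covering $\mathbf{S}^{n+1}$ by finitely many such balls gives $|A|\in L^4(M)$, and feeding this back through the Simons identity $|A|\Lap|A|^2 = 2|\nabla A|^2 + 2n|A|^2 - 2|A|^4$ with the same cutoffs yields $|\nabla A|^2, |\nabla|A||^2 \in L^2(M)$.

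The most delicate point, and the one requiring the hypothesis $\mathcal{H}^{n-4}(\sing V)=0$ rather than merely $\mathcal{H}^{n-2}(\sing V)=0$, is justifying the integration by parts around the singular set at the level of the quartic quantities. The natural test function is $f\eta^2\phi_\epsilon^2$ with $f = |A|^{1+q}$, and one must show that the cross term $\int_M f\,|\nabla f|\,|\nabla\phi_\epsilon|$ (and similar terms coming from $|\nabla(f\eta^2\phi_\epsilon^2)|$) tends to zero as $\epsilon\to 0$. Since $|\nabla\phi_\epsilon|$ is supported where $q = 2$ in the Hausdorff-measure bookkeeping, Corollary \ref{cor:fgradsph}(2) demands that the relevant quantity — here essentially $|A|^{2+2q}$ or $|\nabla|A||\,|A|^{1+q}$ — lie in $L^{q/(q-2)}$ with $q=2$, i.e.\ one needs the iterated integrability that is only available once enough a priori $L^p$ control on $|A|$ has been bootstrapped; but at the first stage one only knows $|A|\in L^2$ locally with the scaling from Lemma \ref{lem:AL2locsph}. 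The resolution is to run the estimate with $q=4$ (so that $|A|^2$ plays the role of $f$, schematically) and use that $\mathcal{H}^{n-4}(\sing V)=0$ to make $\int_M |\nabla\phi_\epsilon|^4 \leq C\epsilon$ via Lemma \ref{lem:gradconvsph}, which kills the boundary terms in the $|A|^4$ estimate after one applies Cauchy--Schwarz to separate $|\nabla\phi_\epsilon|$ from the finite (thanks to Lemma \ref{lem:AL2locsph} and Michael--Simon) a priori quantities. In other words, the singular-set cutoff in the Schoen--Simon--Yau step requires exactly the higher-codimension hypothesis $\mathcal{H}^{n-4}(\sing V)=0$, and this is the technical heart of the lemma.

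Concretely I would proceed as follows. First, fix $p\in\mathbf{S}^{n+1}$ and a small radius $\rho$ (depending only on $n$ and the Michael--Simon constant) and an ordinary cutoff $\eta$ supported in $B_\rho(p)$ with $\eta\equiv 1$ on $B_{\rho/2}(p)$, $|\ol\nabla\eta|\leq C/\rho$. Second, for $q>0$ small, multiply the Simons inequality (\ref{eq:simonsineqsph}) by $|A|^{2q}\eta^2\phi_\epsilon^2$, integrate over $M$, integrate by parts (legitimately, since all integrands are compactly supported in $\reg V$ for fixed $\epsilon$), and collect terms: the left side produces $-\frac{2q}{?}\int|\nabla|A||^2|A|^{2q}\eta^2\phi_\epsilon^2$ plus cutoff cross-terms, while the right side produces $\frac{2}{n}\int|\nabla|A||^2|A|^{2q}\eta^2\phi_\epsilon^2 + n\int|A|^{2+2q}\eta^2\phi_\epsilon^2 - \int|A|^{4+2q}\eta^2\phi_\epsilon^2$; for $q$ small the $|\nabla|A||^2$ terms combine with a strictly positive coefficient and can be dropped from an upper bound on $\int|A|^{4+2q}\eta^2\phi_\epsilon^2$ (this is the usual Schoen--Simon--Yau absorption, valid for $0<q<\sqrt{2/n}$ or similar). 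Third, estimate $\int|A|^{4+2q}\eta^2\phi_\epsilon^2 = \int (|A|^{2+q}\eta\phi_\epsilon)^2$ by Michael--Simon: $\left(\int(|A|^{2+q}\eta\phi_\epsilon)^{2}\right) \leq \left(\int |A|^2 \eta^{?}\right)^{2/n}\left(\int|\nabla(|A|^{1+?}\eta\phi_\epsilon)|^{2}\cdots\right)$ — arranging the exponents so that the factor $\left(\int_{M\cap B_\rho}|A|^2\right)^{2/n}\leq (C\rho^{n-2})^{2/n}$ from Lemma \ref{lem:AL2locsph} appears and is $\leq 1/2$ for $\rho$ small, thereby absorbing the leading term. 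Fourth, handle the $\phi_\epsilon$ cross-terms: every such term carries a factor of $|\nabla\phi_\epsilon|$, and by Cauchy--Schwarz against the (now established, $\epsilon$-uniformly bounded) quantity $\int|A|^{?}\eta^2$ together with $\int_M|\nabla\phi_\epsilon|^4\leq 2^{n+4}C_V\epsilon$ from Lemma \ref{lem:gradconvsph} (using $\mathcal{H}^{n-4}(\sing V)=0$), these vanish as $\epsilon\to0$. Taking $\epsilon\to0$ by Fatou on the left and dominated convergence on the right gives $\int_{M\cap B_{\rho/2}(p)}|A|^{4+2q}\leq C(n,V,\lambda_1)$. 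Fifth, cover the compact manifold $\mathbf{S}^{n+1}$ by finitely many balls $B_{\rho/2}(p_j)$ to conclude $|A|\in L^{4+2q}(M)\subset L^4(M)$. Sixth and last, return to the identity form of Simons (\ref{eq:simonseq}), multiply by $\eta^2\phi_\epsilon^2$, integrate by parts, and use the now-known $L^4$ bound on $|A|$ to bound the right-hand side, yielding $\int_M|\nabla A|^2\eta^2 \leq C$ and hence (covering again) $|\nabla A|^2\in L^2(M)$; since $|\nabla|A||\leq|\nabla A|$ pointwise on $\reg V$, also $|\nabla|A||\in L^2(M)$. The only genuine obstacle is the bookkeeping in steps three and four — matching the Michael--Simon exponents and the Hausdorff-measure exponent so that the codimension-$4$ hypothesis is exactly what makes the cutoff errors disappear — and I expect that, as in Schoen--Simon--Yau, one small parameter $q$ and one small radius $\rho$ suffice.
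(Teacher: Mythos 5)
Your proposal has the right general shape---Schoen--Simon--Yau absorption, the cutoff $\phi_\epsilon$ around the singular set, and the correct recognition that the local $L^2$ estimate of Lemma \ref{lem:AL2locsph} and the codimension-4 hypothesis are the engine of the argument---but it introduces machinery the paper does not need and, more importantly, the step where the $\mathcal{H}^{n-4}$ hypothesis is actually used is not carried out in a way that closes.

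The paper's route is considerably leaner. It does not use powers $|A|^{1+q}$, the Michael--Simon Sobolev inequality, or a local ball cutoff $\eta$. Instead it plugs $\phi=|A|f$ into the stability inequality, multiplies Simons' inequality by $f^2$, and combines the two (with the absorbing inequality, parameter $a<1/n$) to arrive directly at
\begin{equation}
\int_M |A|^4 f^2 \leq C(n,\lambda_1)\int_M |A|^2\bigl(f^2+|\nabla f|^2\bigr)
\end{equation}
for any compactly supported $f$ on $M=\reg V$. It then takes $f=\phi_\epsilon$ globally. The codimension-4 hypothesis enters at precisely one place, and not the place your sketch puts it. The paper does \emph{not} bound $\int_M|\nabla\phi_\epsilon|^4$ and then H\"older against some $|A|$-quantity; rather, it estimates the error term directly, using the annular structure of $\supp\nabla\phi_\epsilon$, as
\begin{equation}
\int_M |A|^2|\nabla\phi_\epsilon|^2 \;\leq\; \sum_i \frac{4}{r_i^2}\int_{M\cap B_{2r_i}(p_i)\setminus B_{r_i}(p_i)}|A|^2 \;\leq\; 4C'\sum_i r_i^{n-4} \;<\; 4C'\epsilon,
\end{equation}
where the middle step invokes Lemma \ref{lem:AL2locsph} to supply the factor $r_i^{n-2}$, which combines with the $(2/r_i)^2$ from the cutoff gradient to give $r_i^{n-4}$. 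The hypothesis $\mathcal{H}^{n-4}(\sing V)=0$ is exactly what licenses choosing the cover with $\sum_i r_i^{n-4}<\epsilon$.

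This is where your proposal has a genuine gap. You suggest controlling $\int_M|A|^2|\nabla\phi_\epsilon|^2$ by Cauchy--Schwarz against $\int_M|\nabla\phi_\epsilon|^4\leq C\epsilon$, with the complementary factor being some $\int|A|^{?}\eta^2$ that you describe as ``now established, $\epsilon$-uniformly bounded.'' But a H\"older split with the gradient term in $L^4$ forces the $|A|$-factor into $L^4$---which is the very conclusion you are trying to reach, so the argument as written is circular. (If you instead push the gradient into $L^\infty$, you face $\sup_i 1/r_i^2$, which blows up as the cover refines.) The resolution is to avoid the split entirely and use the \emph{local, scaled} $L^2$ estimate on each annulus $B_{2r_i}\setminus B_{r_i}$, as above: the $r_i^{n-2}$ from Lemma \ref{lem:AL2locsph} is designed exactly to cancel the $r_i^{-2}$ from the cutoff and leave $r_i^{n-4}$. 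Your Michael--Simon and power-iteration steps are not needed for this lemma (they belong to the higher-$p$ iteration of SSY, which this paper never requires), and they obscure the one estimate that actually uses the codimension hypothesis.
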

\begin{proof}
We adapt the Schoen-Simon-Yau \cite{SSY} type argument. 

Set $\alpha= |-\lambda_1-n|$. If $f$ has compact support in $ M$ then applying the stability inequality (\ref{eq:stabilitysph}) with $\phi=|{A}|f$ and using the absorbing inequality yields
\begin{equation}
\label{eq:ssy1sph}
\int_M |{A}|^4 f^2 \leq (1+a)\int_M |\nabla |{A}||^2 f^2 + \int_M |{A}|^2\left((1+a^{-1})|\nabla f|^2 + \alpha f^2\right),
\end{equation}
where $a>0$ is an arbitrary positive number to be chosen later. 

On the other hand, multiplying the Simons' inequality (\ref{eq:simonsineqsph}) by $f^2$ (dropping the harmless $n|{A}|^2$ term), integrating by parts and again using the absorbing inequality gives \begin{equation}
\label{eq:ssy2sph}
\int_M |{A}|^4 f^2 + a^{-1} \int_M |{A}|^2|\nabla f|^2 \geq \left(1+\frac{2}{n}-a\right) \int_M |\nabla |{A}||^2f^2. 
\end{equation}

Combining these gives
\begin{equation}
\int_M |{A}|^4 f^2 \leq \frac{1+a}{1+\frac{2}{n}-a} \int_M |{A}|^4 f^2 + C_a \int_M |{A}|^2(|\nabla f|^2+\alpha f^2).
\end{equation} 
Choosing $a<\frac{1}{n}$ gives that the first coefficient on the right is less than 1 and hence may be absorbed on the left, thus
\begin{equation}
\int_M |{A}|^4 f^2 \leq  C\int_M |{A}|^2 (f^2+|\nabla f|^2),
\end{equation}
where $C=C(n,\alpha)$. 

We now apply this inequality with $f = \phi_\epsilon$, where $\phi_\epsilon$ is as in Section \ref{sec:intsing} with $q=4$. As $\epsilon\rightarrow 0$, the first term on the right converges to $\int_M |{A}|^2$, which we already know is finite. We bound the second term as follows:
\begin{equation}
\int_M |{A}|^2 |\nabla \phi_\epsilon|^2 \leq \sum_{i=1}^m \frac{4}{r_i^2}\int_{M \cap {B}_{2r_i}(p_i)\setminus {B}_{r_i}(p_i)} |{A}|^2.
\end{equation}
Using Lemma \ref{lem:AL2locsph}, we have that
\begin{eqnarray}
\int_{M\cap {B}_{2r_i(p_i)}\setminus {B}_{r_i}(p_i)} |{A}|^2 \leq C' r_i^{n-2},
\end{eqnarray}
where $C'$ depends on $\alpha$ and the volume bounds for $M$. 

Therefore
\begin{equation}
\label{eq:ssyfin}
\int_M |{A}|^2|\nabla \phi_\epsilon|^2  \leq 4C'\sum_i r_i^{n-4} < 4C'\epsilon.
\end{equation}
where we recall that the $r_i$ were chosen so that $\sum_i r_i^{n-4} < \epsilon$. 

Taking $\epsilon\rightarrow 0$ we see that this term tends to 0, thus we have shown that indeed $|{A}|$ is $L^4$. With this fact in hand, it follows from (\ref{eq:ssy2sph}) that $|\nabla |{A}||$ is $L^2$.

Finally, multiplying the identity (\ref{eq:simonseq}) by $f^2$ and integrating by parts, we have that 
\begin{equation}
\int_M f^2(|\nabla {A}|^2-|{A}|^4)\rho \leq -\int_M 2f|{A}|\langle \nabla f,\nabla |{A}|\rangle  \leq \int_M (f^2|\nabla|{A}||^2 + |{A}|^2|\nabla f|^2). 
\end{equation}
Since we now know that $|\nabla |{A}||$ is $L^2$ and that $|{A}|$ is $L^4$, we again set $f=\phi_\epsilon$ and use (\ref{eq:ssyfin}) to control the last term; this shows that $|\nabla {A}|^2$ is $L^2$, as desired.

\end{proof}

\section{First stability eigenvalue}
\label{sec:stabeig}

In this section we prove our main theorem that the first stability eigenvalue of a stationary $n$-varifold in $\mathbf{S}^{n+1}$ is at most $-2n$. First we need the following lemma:

\begin{lemma}
 \label{lem:lipcutsph}
 Let $V$ be a stationary integral $n$-varifold in $\mathbf{S}^{n+1}$, with orientable regular part. If $\mathcal{H}^{n-4}(\sing V)=0$ then we get the same $\lambda_1=\lambda_1(V)$ by taking the infimum over Lipschitz functions $f$ on $ M=\reg V$ such that $f\in W^{1,2} \cap L^4$. \end{lemma}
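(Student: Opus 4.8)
The plan is to show that the infimum defining $\lambda_1(V)$ in \eqref{eq:eigdefn}, which is a priori taken over Lipschitz functions with compact support in $M$, does not change when we enlarge the admissible class to all Lipschitz $f\in W^{1,2}\cap L^4$ on $M$. One inequality is immediate: enlarging the class of test functions can only decrease (or preserve) the infimum, so $\inf_{W^{1,2}\cap L^4}\le \lambda_1(V)$. The substance is the reverse inequality, i.e.\ that compactly supported functions suffice, and for this the natural device is to multiply a given test function by the cutoffs $\phi_\epsilon$ from Section \ref{sec:cutoff} and show that the Rayleigh quotient is not disturbed in the limit $\epsilon\to 0$.

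So, fix a Lipschitz $f\in W^{1,2}(M)\cap L^4(M)$ with $\int_M f^2>0$, and consider the compactly supported Lipschitz functions $f_\epsilon = f\phi_\epsilon$, where $\phi_\epsilon$ is built from a covering of $\sing V$ with $\sum_i r_i^{n-4}<\epsilon$ (using $\mathcal{H}^{n-4}(\sing V)=0$ and $q=4$). I would check three convergences as $\epsilon\to 0$:
\begin{enumerate}
\item $\int_M f_\epsilon^2 \to \int_M f^2$ and $\int_M |A|^2 f_\epsilon^2 \to \int_M |A|^2 f^2$, both by dominated convergence, using $\phi_\epsilon\uparrow 1$ pointwise on $M$, $f\in L^2$, and (for the second) that $|A|$ is $L^4$ by Lemma \ref{lem:ssysph} together with $f\in L^4$, so $|A|^2 f^2\in L^1$ by Cauchy--Schwarz;
\item $\int_M |\nabla f_\epsilon|^2 \to \int_M |\nabla f|^2$. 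Here $\nabla f_\epsilon = \phi_\epsilon \nabla f + f\nabla\phi_\epsilon$, so $|\nabla f_\epsilon|^2 \le 2\phi_\epsilon^2|\nabla f|^2 + 2f^2|\nabla\phi_\epsilon|^2$; the first term converges to $\int_M|\nabla f|^2$ by dominated convergence since $f\in W^{1,2}$, and the cross and gradient-of-cutoff terms vanish because $f^2|\nabla\phi_\epsilon|^2\to 0$ in $L^1$ by Corollary \ref{cor:fgradsph}(2) with $q=4$ and $p=\frac{2q}{q-2}=4$, using precisely that $f\in L^4$; the cross term $\int 2\phi_\epsilon f\langle\nabla f,\nabla\phi_\epsilon\rangle$ is handled by Cauchy--Schwarz against these two pieces.
\end{enumerate}
Combining, the Rayleigh quotient of $f_\epsilon$ converges to that of $f$, so the latter is $\ge \lambda_1(V) = \inf_\Omega\lambda_1(\Omega)$, which (taking the infimum over all such $f$) gives the reverse inequality and hence equality.

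The main obstacle—really the only point requiring the hypotheses in full strength—is the control of the cutoff gradient term $\int_M f^2|\nabla\phi_\epsilon|^2$. This is exactly where $\mathcal{H}^{n-4}(\sing V)=0$ enters (it forces the exponent $q=4$ in the covering, hence the gradient estimate of Lemma \ref{lem:gradconvsph} at the right power) and where $f\in L^4$ is needed (it is the dual exponent $p=\frac{2q}{q-2}=4$ in Corollary \ref{cor:fgradsph}(2)). Everything else is routine dominated convergence once one knows $|A|\in L^4$, which is supplied by Lemma \ref{lem:ssysph}; note that the hypothesis $\mathcal{H}^{n-4}(\sing V)=0$ is also what lets us invoke that lemma in the first place. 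One small bookkeeping point: the infimum in \eqref{eq:eigdefn} is literally $\inf_\Omega \lambda_1(\Omega)$, but exhausting $M$ by domains and using the standard fact that $\lambda_1(\Omega) = \inf\{\,\mathrm{Rayleigh\ quotient\ of\ }g : g\in C_c^\infty(\Omega)\,\}$ identifies it with the infimum of the Rayleigh quotient over compactly supported Lipschitz functions, so the argument above is stated at the level of Rayleigh quotients without loss of generality.
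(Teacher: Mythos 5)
Your proposal is correct and follows essentially the same strategy as the paper: multiply the test function by the cutoffs $\phi_\epsilon$, then verify convergence of the three integrals in the Rayleigh quotient using $|A|\in L^4$ (from Lemma \ref{lem:ssysph}), dominated convergence, and Corollary \ref{cor:fgradsph} with $q=4$. The only cosmetic difference is that you control the cross term $\int 2\phi_\epsilon f\langle\nabla f,\nabla\phi_\epsilon\rangle$ by Cauchy--Schwarz against the other two pieces, whereas the paper invokes Corollary \ref{cor:fgradsph}(1) directly (using that $|f|\,|\nabla f|\in L^{4/3}$ via H\"older); both are valid and equivalent in spirit.
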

\begin{proof}
Obviously we may assume that $\lambda_1>-\infty$. Then by Lemma \ref{lem:AL2sph} and since $f$ is $L^4$, we also have that $|A|f$ is also $L^2$. 
We will use the functions $f_\epsilon = f \phi_\epsilon$, which are compactly supported away from the singular set, in the definition (\ref{eq:eigdefn}) of $\lambda_1$. 

Now since $f$ and $|A|f$ are $L^2$, dominated convergence gives that $\int_M f_\epsilon^2 \rightarrow \int_M f^2$ and $\int_M |A|^2 f_\epsilon^2 \rightarrow \int_M |A|^2 f^2$ as $\epsilon\rightarrow 0$. For the gradient term we have
\begin{equation}
\label{eq:lipcutsph}
\int_M |\nabla f_\epsilon|^2 =  \int_M (\phi_\epsilon^2|\nabla f|^2 + 2\langle \nabla f,\nabla\phi_\epsilon\rangle + f^2 |\nabla \phi_\epsilon|^2  ).
\end{equation}
Since $f$ is $L^4$, parts (1) and (2) respectively of Corollary \ref{cor:fgradsph} give that the second and third terms on the right tend to zero as $\epsilon\rightarrow 0$. Moreover, the first term on the right tends to $\int_M |\nabla f|^2$ by dominated convergence. Thus we have shown that $\int_M |\nabla f_\epsilon|^2  \rightarrow \int_M |\nabla f|^2$, and the lemma follows. 
\end{proof}

We now proceed to the proof of Theorem \ref{thm:stabilityeigsphintro}.

\begin{theorem}
\label{thm:stabilityeigsph}
Let $V$ be a stationary integral $n$-varifold in $\mathbf{S}^{n+1}$, with orientable regular part. Suppose that $V$ satisfies the $\alpha$-structural hypothesis for some $\alpha\in(0,\frac{1}{2})$.

Further suppose that $V$ is not totally geodesic in $\mathbf{S}^{n+1}$. Then $\lambda_1(V)\leq -2n$, with equality if and only if $V$ is an integer multiple of a Clifford hypersurface $ \mathbf{S}^k\left(\sqrt{\frac{k}{n}}\right)\times \mathbf{S}^l\left(\sqrt{\frac{l}{n}}\right)$, where $k+l=n$. 
\end{theorem}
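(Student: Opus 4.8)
The plan is to use $|A|$ as a test function in the variational characterisation (\ref{eq:eigdefn}) of $\lambda_1$, which is now justified: by Lemma \ref{lem:ssysph} and the regularity theory (Proposition \ref{prop:regularity}, whose hypotheses imply $\mathcal{H}^{n-4}(\sing V)=0$ when $\alpha<\tfrac12$), $|A|$ lies in $W^{1,2}\cap L^4$ on $M=\reg V$, and $|\nabla A|^2$ is $L^1$. Assuming $V$ is not totally geodesic, $|A|$ is not identically zero, so it is an admissible test function. I would first reduce to the case $\lambda_1>-\infty$ (otherwise there is nothing to prove) and note that by Lemma \ref{lem:connectedness}, $\reg V$ is connected, so $|A|$ does not vanish on any open set.

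The key computation is to plug $f=|A|$ into (\ref{eq:eigdefn}), giving
\begin{equation}
\label{eq:plan1}
\lambda_1 \int_M |A|^2 \leq \int_M \left(|\nabla |A||^2 - |A|^4 - n|A|^2\right).
\end{equation}
On the other hand, multiplying Simons' inequality (\ref{eq:simonseq}) by a cutoff $\phi_\epsilon^2$, integrating by parts (justified by Lemma \ref{lem:ibpsph}, using that $|A|\Lap|A|$ is $L^1$ via $|A|\in L^4$ and $|\nabla A|^2\in L^1$, and that the boundary terms vanish by Lemma \ref{lem:gradconvsph} and the local estimate Lemma \ref{lem:AL2locsph}), and passing $\epsilon\to 0$, I would obtain
\begin{equation}
\label{eq:plan2}
\int_M |\nabla |A||^2 \leq \int_M \left(|A|^4 - n|A|^2\right) - \tfrac{2}{n}\int_M |\nabla|A||^2 + \int_M\left(|\nabla A|^2 - |\nabla|A||^2\right)\text{-type terms},
\end{equation}
more precisely, from (\ref{eq:simonsineqsph}), $\int_M |A|\Lap|A| \geq \tfrac{2}{n}\int_M|\nabla|A||^2 + n\int_M|A|^2 - \int_M|A|^4$, while integration by parts gives $\int_M |A|\Lap|A| = -\int_M |\nabla|A||^2$. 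Combining, $(1+\tfrac2n)\int_M|\nabla|A||^2 \leq \int_M|A|^4 - n\int_M|A|^2$. Substituting back into (\ref{eq:plan1}) and using $\int_M|\nabla|A||^2 \leq \int_M|A|^4 - n\int_M|A|^2$ trivially, or rather the sharper Kato-improved bound, yields $\lambda_1\int_M|A|^2 \leq -2n\int_M |A|^2$, hence $\lambda_1\leq -2n$.

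For the equality case, I would trace back which inequalities must be saturated. Equality in (\ref{eq:plan1}) forces $|A|$ to be an actual eigenfunction with eigenvalue $\lambda_1=-2n$, so $\Lap|A| + |A|^3 + n|A| = 2n|A|$, i.e. $|A|\Lap|A| = |A|^4 - (n)|A|^2$ wait — $\Lap|A| = (n-|A|^2)|A|$; combined with Simons (\ref{eq:simonsineqsph}) written as an equality forces $|\nabla A|^2 = (1+\tfrac2n)|\nabla|A||^2$, the refined Kato equality, which (as in the classical Simons/Chern--do Carmo--Kobayashi or Lawson analysis) forces $|A|$ to be \emph{constant}; then $\Lap|A|=0$ gives $|A|^2 = n$. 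A minimal hypersurface in $\mathbf{S}^{n+1}$ with $|A|^2\equiv n$ is, by the Chern--do Carmo--Kobayashi theorem applied on the smooth part (and the regularity/analyticity from Proposition \ref{prop:regularity} to rule out singularities, since a Clifford torus is smooth), a Clifford hypersurface $\mathbf{S}^k(\sqrt{k/n})\times\mathbf{S}^l(\sqrt{l/n})$; the varifold is then an integer multiple of it by constancy of the multiplicity on the connected smooth part. Conversely one checks directly that these Clifford hypersurfaces have $\lambda_1=-2n$ with eigenfunction the coordinate giving $|A|$.

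The main obstacle I anticipate is \emph{not} the algebra but the careful justification of the integration by parts $\int_M|A|\Lap|A| = -\int_M|\nabla|A||^2$ around the singular set: one needs $|\nabla|A||\,|\nabla|A||$, $|A\Lap A|$ all $L^1$ and $|\,|A|\nabla|A||$ in the appropriate $L^p$ with $p=q/(q-1)$, which with $\mathcal{H}^{n-4}(\sing V)=0$ means $q=4$, $p=4/3$ — this follows from $|A|\in L^4$ and $|\nabla|A||\in L^2$ by Hölder, so it is covered by Lemma \ref{lem:ibpsph}. A secondary subtlety is upgrading the pointwise/smooth-part identity $|A|^2\equiv n \Rightarrow$ Clifford to the varifold: here I would invoke that Proposition \ref{prop:regularity} gives analyticity with $\sing V$ of codimension $\geq 7$, and a connected analytic minimal hypersurface in $\mathbf{S}^{n+1}$ with $|A|^2\equiv n$ extends to the smooth closed Clifford hypersurface (which has empty singular set), with the varifold multiplicity constant by connectedness of $\reg V$ (Lemma \ref{lem:connectedness}).
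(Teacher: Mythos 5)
Your argument for the inequality $\lambda_1(V)\leq -2n$ follows the paper's proof essentially step-for-step: establish via Proposition~\ref{prop:regularity} that $\mathcal{H}^{n-4}(\sing V)=0$, invoke Lemma~\ref{lem:ssysph} for the integrability of $|A|$, $|\nabla|A||$, $|\nabla A|$, use Lemma~\ref{lem:lipcutsph} to admit $|A|$ as a test function, justify the integration by parts $\int_M|A|\Lap|A|=-\int_M|\nabla|A||^2$ via Lemma~\ref{lem:ibpsph} (your remark that $|A|\,|\nabla|A||\in L^{4/3}$ is exactly the $p=q/(q-1)$ check with $q=4$ that the paper makes via Young's inequality), and combine with Simons' inequality to obtain $(1+\tfrac2n)\int_M|\nabla|A||^2\leq\int_M(|A|^4-n|A|^2)$ and hence $\lambda_1\leq -2n$. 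That part is sound.

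The equality case is where you depart from the paper and where your reasoning is muddled. You claim that equality in the Rayleigh quotient forces $|A|$ to be a genuine eigenfunction with $\Lap|A|=(n-|A|^2)|A|$, and then that combining this with Simons' ``forces the refined Kato equality $|\nabla A|^2=(1+\tfrac2n)|\nabla|A||^2$.'' Neither step is cleanly justified. First, the eigenfunction property requires a minimizer argument in $W^{1,2}\cap L^4$ on the \emph{singular} hypersurface; it is not automatic and you do not supply it. Second, substituting $|A|\Lap|A|=n|A|^2-|A|^4$ into the exact identity in (\ref{eq:simonsineqsph}) gives $|\nabla A|^2=|\nabla|A||^2$, \emph{not} the Kato equality; it is only by then appealing to the Kato \emph{inequality} $|\nabla A|^2\geq(1+\tfrac2n)|\nabla|A||^2$ that one deduces $|\nabla|A||\equiv 0$. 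The paper avoids all of this: since the final step of (\ref{eq:stabilityeigsph}) merely drops the nonnegative term $\tfrac2n\int_M|\nabla|A||^2$, equality immediately forces $\int_M|\nabla|A||^2=0$; the preceding equality then gives $\int_M|A|^4=n\int_M|A|^2$. Together with connectedness and $V$ not totally geodesic, these give $|A|\equiv\sqrt n$ with no eigenfunction argument. You should adopt that direct chain-of-equalities argument. Finally, your invocation of Chern--do Carmo--Kobayashi is not quite the right citation: that theorem assumes a closed hypersurface, whereas here the regular part is a priori incomplete; the paper instead cites Lawson's local rigidity theorem, which explicitly does not require completeness and is what makes the step legitimate before passing to the constancy theorem.
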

\begin{proof}
Set $M=\reg V$. 

Obviously we may assume $\lambda_1 >-\infty$. The regularity theory Proposition \ref{prop:regularity} then implies that $\sing V$ has codimension at least 7. Thus we certainly have that $\mathcal{H}^{n-4}(\sing V)=0$. 

So by Lemma \ref{lem:ssysph}, we have that $|{A}|$ is $L^4$ (and $L^2$), and that $|\nabla |{A}||$ and $|\nabla {A}|$ are $L^2$. Therefore, by Lemma \ref{lem:lipcutsph}, we may use $|{A}|$ as a test function in the definition of $\lambda_1$, that is we have \begin{equation}\lambda_1(M) \leq \frac{\int_M (|\nabla |{A}||^2 -|{A}|^4 - n|{A}|^2)}{\int_M |{A}|^2}.\end{equation} Now we wish to integrate by parts. By (\ref{eq:simonsineqsph}) and our integral estimates for $A$ we have that $|A|\Lap|A|$ is $L^1$. Using Young's inequality we have \begin{equation}(|{A}|\, |\nabla |{A}||)^p\leq  \frac{2-p}{2} |{A}|^{\frac{2p}{2-p}} + \frac{p}{2}|\nabla |{A}||^2.\end{equation} Again since $|{A}|$ is $L^4$ and $|\nabla |A||$ is $L^2$, this implies that $|{A}|\, |\nabla |{A}||$ is $L^p$ for $p=\frac{4}{3}$. Thus now we may use Lemma \ref{lem:ibpsph} together with the Simons' inequality (\ref{eq:simonsineqsph}) to find that
\begin{eqnarray}
\label{eq:stabilityeigsph}
\int_M |\nabla |{A}||^2 &=& -\int_M |{A}|\Lap |{A}| \leq \int_M \left(-\frac{2}{n}|\nabla|{A}||^2+|{A}|^4 - n|{A}|^2\right) \\&\leq &\nonumber \int_M (|{A}|^4 - n|{A}|^2),
\end{eqnarray}
which implies that $\lambda_1(M)\leq -2n$ as claimed.

If $\lambda_1(M)=-2n$, then equality must hold in all previous inequalities. In particular for equality to hold in the last step of (\ref{eq:stabilityeigsph}) we must have $\int_M |\nabla |{A}||^2=0$ and $\int_M |A|^4 = n\int_M|A|^2$. Therefore $|{A}|$ is equal to a constant on $M$, and the constant must be $\sqrt{n}$. The Gauss equation and a theorem of Lawson \cite[Theorem 1]{lawson} (which does not assume completeness) then imply that $ M$ is a piece of a Clifford hypersurface $M_0= \mathbf{S}^k\left(\sqrt{\frac{k}{n}}\right)\times \mathbf{S}^l\left(\sqrt{\frac{l}{n}}\right)$, where $k+l=n$. The support $\supp V$ is then contained in $M_0$, so the constancy theorem implies that $V=m[M_0]$ for some integer $m$, as desired.
\end{proof}

\appendix

\section{A smooth cutoff construction}

In this appendix we reproduce the cutoff construction of Morgan and Ritor\'{e} \cite{MR}. We do so only for the case of submanifolds in Euclidean space with compact support and singular set of vanishing codimension 2 measure. The other cases considered in \cite{MR} are similar; see also Remark \ref{rmk:morgan}. In this section only, $B(x,r)$ denotes a Euclidean ball of radius $r$ centred at $x$. 

First we need an easy bound for the number of intersections of balls of comparable radii.   

\begin{lemma}
\label{lem:intersectionbd}
Let $\mathcal{B}=\{B(p_i,r_i)\}$ be a collection of balls in $\mathbb{R}^N$. Suppose that there are $\alpha,\beta\geq 1$ such that the sub-balls $\{B(p_i,r_i/\alpha)\}$ are pairwise disjoint, and that the radii are $\beta$-comparable, that is, $\sup r_i \leq \beta \inf r_i$. Then each ball in $\mathcal{B}$ intersects at most $(3\alpha\beta)^N-1$ other balls in $\mathcal{B}$.
\end{lemma}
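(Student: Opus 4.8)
The plan is to prove the lemma by a standard volume-packing argument. Fix a ball $B(p_i,r_i)\in\mathcal{B}$ and let $J$ denote the set of indices $j$ with $B(p_j,r_j)\cap B(p_i,r_i)\neq\emptyset$; note $i\in J$ and we want to bound $|J|-1$. First I would observe that if $B(p_j,r_j)$ meets $B(p_i,r_i)$ then $|p_j-p_i|\le r_i+r_j\le 2\beta\inf_k r_k\le 2\beta r_i$ (using $\beta$-comparability in both directions: $r_j\le\beta\inf r_k$ and $r_i\ge\inf r_k$, so $r_j\le\beta r_i$, and similarly $r_i\le\beta r_j$). Hence every center $p_j$, $j\in J$, lies in the ball $B(p_i,2\beta r_i)$, and more to the point, if $\rho:=\inf_k r_k$, every sub-ball $B(p_j,r_j/\alpha)$ with $j\in J$ is contained in $B(p_i, 2\beta r_i + r_j/\alpha)\subset B(p_i, 2\beta r_i + r_j)$. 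To get a clean radius I would bound this containing ball by $B(p_i, R)$ with $R = 2\beta r_i + \sup_k r_k \le 2\beta r_i + \beta r_i = 3\beta r_i$ — actually it is cleaner to center the count at a fixed scale, so let me instead note each sub-ball $B(p_j, r_j/\alpha)$ for $j \in J$ sits inside $B(p_i, 3\beta r_i)$ since $r_j/\alpha \le r_j \le \beta r_i$.

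Next I would invoke disjointness of the sub-balls: the balls $\{B(p_j, r_j/\alpha)\}_{j\in J}$ are pairwise disjoint and all contained in $B(p_i, 3\beta r_i)$, so summing Lebesgue volumes gives $\sum_{j\in J}\omega_N (r_j/\alpha)^N \le \omega_N (3\beta r_i)^N$, where $\omega_N$ is the volume of the unit ball. Using $r_j \ge r_i/\beta$ for each $j\in J$ (again $\beta$-comparability), the left side is at least $|J|\,\omega_N (r_i/(\alpha\beta))^N$, so $|J|\,(r_i/(\alpha\beta))^N \le (3\beta r_i)^N$, i.e. $|J| \le (3\alpha\beta^2)^N$. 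This gives a bound, but not quite the claimed $(3\alpha\beta)^N$; to sharpen it I would avoid the crude step $r_j/\alpha \le \beta r_i$ and instead keep the containing ball radius at $2\beta r_i + r_j/\alpha$ while lower-bounding $r_j/\alpha \ge r_i/(\alpha\beta)$ — the mismatch in powers of $\beta$ suggests the intended argument keeps everything measured against $\inf_k r_k =: \rho$ rather than against $r_i$.

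So the cleaner route, which I expect is what the authors intend: set $\rho=\inf_k r_k$ and $\sigma=\sup_k r_k\le\beta\rho$. For $j\in J$, $B(p_j,r_j)$ meets $B(p_i,r_i)$ so $|p_j - p_i|\le r_i+r_j\le 2\sigma$. Then the disjoint sub-ball $B(p_j,r_j/\alpha)$ has $B(p_j, r_j/\alpha)\subset B(p_i, 2\sigma + \sigma/\alpha) \subset B(p_i, 3\sigma)$ (since $\alpha\ge1$). The sub-balls have radius $r_j/\alpha\ge\rho/\alpha$. Disjointness and volume comparison give $|J|(\rho/\alpha)^N \le (3\sigma)^N \le (3\beta\rho)^N$, whence $|J|\le(3\alpha\beta)^N$, and the number of \emph{other} balls meeting $B(p_i,r_i)$ is $|J|-1\le(3\alpha\beta)^N-1$, as claimed. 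The main (and only real) obstacle is precisely this bookkeeping with the constants: one must be careful to measure the enclosing ball and the packed sub-balls against the global $\inf$ and $\sup$ of the radii rather than against the particular $r_i$, so that only one factor of $\beta$ enters rather than two; everything else is the textbook disjoint-packing volume estimate in $\mathbb{R}^N$.
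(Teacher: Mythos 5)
Your ``cleaner route'' in the final paragraph is correct and is essentially the paper's argument: cover the intersecting sub-balls by a ball of radius $\approx 3\sup_k r_k$ about $p_i$, then compare Lebesgue volumes using the disjointness and the lower bound $r_j/\alpha \geq \inf_k r_k/\alpha$ on sub-ball radii. You also correctly diagnosed the pitfall (measuring against $r_i$ rather than the global $\inf$ and $\sup$ costs an extra factor of $\beta$), and the paper sidesteps it in exactly the same way.
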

\begin{proof}
Fix a ball $B(p_i,r_i) \in \mathcal{B}$. Any ball in $\mathcal{B}$ that intersects $B(p_i,r_i)$ must be contained in the larger ball $B(p_i, r_i + 2\sup r_j)$, which has volume at most $\omega_N (3\sup r_j)^N$. Here $\omega_N$ is the volume of the unit ball in $\mathbb{R}^N$. 

On the other hand, since the sub-balls $\{B(p_j,r_j/\alpha)\}$ are pairwise disjoint, each must take up a volume at least $\omega_N (\inf r_j/\alpha)^N$ in the larger ball $B(p_i, r_i + 2\sup r_j)$. So at most $\frac{(3\sup r_j)^N}{(\inf r_j/\alpha)^N} \leq (3\alpha\beta)^N$ sub-balls can fit in the larger ball, and this implies the result. 
\end{proof}

We now proceed to the construction of smooth cutoff functions around the singular set. 

\begin{proposition}[\cite{MR}]
Let $\Sigma^k$ be a smooth embedded submanifold in $\mathbb{R}^N$ with bounded mean curvature and compact closure $\ol{\Sigma}$. If $\sing\Sigma = \ol{\Sigma}\setminus\Sigma$ satisfies $\mathcal{H}^{k-2}(\sing \Sigma)=0$, then for any $\epsilon>0$ there exists a smooth function $\varphi_\epsilon: \ol{\Sigma} \rightarrow [0,1]$ supported in $\Sigma$ such that: 
\begin{enumerate}
\item $\mathcal{H}^k(\{\varphi_\epsilon\neq 1\}) <\epsilon$;
\item $\int_\Sigma |\nabla \varphi_\epsilon|^2 < \epsilon$;
\item $\int_\Sigma |\Lap \varphi_\epsilon| < \epsilon$.
\end{enumerate}
\end{proposition}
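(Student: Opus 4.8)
The plan is to follow Morgan and Ritor\'{e}: excise a small neighbourhood of $\sing\Sigma$ using a product of smooth radial bumps, one for each ball of an efficient cover of the singular set. We may assume $k\geq 3$, since for $k\leq 2$ the hypothesis $\mathcal H^{k-2}(\sing\Sigma)=0$ forces $\sing\Sigma=\emptyset$ and we take $\varphi_\epsilon\equiv 1$. Since $\Sigma$ has bounded mean curvature and $\ol\Sigma$ is compact, the monotonicity formula gives a constant $C_V$ with $\mathcal H^k(\Sigma\cap B(x,r))\leq C_V r^k$ for all $x\in\mathbb R^N$ and $r\leq 1$. Fix $\epsilon>0$ and a small $\delta>0$ to be chosen at the end in terms of $\epsilon$ and dimensional constants. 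As $\sing\Sigma$ is compact and $\mathcal H^{k-2}$-null, cover it by finitely many balls $B(p_i,s_i)$ with $p_i\in\sing\Sigma$, each $s_i<1$, and $\sum_i s_i^{k-2}<\delta$. Then regularize the cover: group the indices into dyadic scales $\mathcal F_m=\{i:2^{-m-1}<s_i\leq 2^{-m}\}$ and, within each $\mathcal F_m$, apply the Vitali $5r$-covering lemma to extract $G_m\subseteq\mathcal F_m$ whose cores $\{B(p_i,s_i)\}_{i\in G_m}$ are pairwise disjoint and whose fivefold enlargements still cover $\bigcup_{i\in\mathcal F_m}B(p_i,s_i)$. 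With $G=\bigcup_m G_m$, the balls $\{B(p_i,5s_i)\}_{i\in G}$ cover $\sing\Sigma$, we still have $\sum_{i\in G}s_i^{k-2}<\delta$, and — since within each scale the cores are disjoint and the radii $2$-comparable — Lemma \ref{lem:intersectionbd} bounds by a dimensional constant $c_N$ the number of tenfold enlargements $\{B(p_i,10s_i)\}_{i\in G_m}$ containing any given point.

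Next, for each $i\in G$ take a smooth radial $\phi_i$ on $\mathbb R^N$ with $\phi_i\equiv 0$ on $B(p_i,5s_i)$, $\phi_i\equiv 1$ off $B(p_i,10s_i)$, $0\leq\phi_i\leq 1$, $|\ol\nabla\phi_i|\leq C/s_i$ and $|\ol\nabla^2\phi_i|\leq C/s_i^2$, and set $\varphi_\epsilon=\prod_{i\in G}\phi_i|_\Sigma$. Because $k\geq 3$ and $\sing\Sigma$ is compact, only finitely many $B(p_i,10s_i)$ meet any fixed point of $\Sigma$, so the product is locally finite; hence $\varphi_\epsilon$ is smooth, takes values in $[0,1]$, and vanishes on the neighbourhood $\bigcup_{i\in G}B(p_i,5s_i)$ of $\sing\Sigma$, so it is supported in $\Sigma$. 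Property (1) is then immediate, since $\{\varphi_\epsilon\neq 1\}\subseteq\bigcup_{i\in G}B(p_i,10s_i)$ gives
\[
\mathcal H^k(\{\varphi_\epsilon\neq 1\})\leq\sum_{i\in G}C_V(10s_i)^k\leq 10^k C_V\sum_{i\in G}s_i^{k-2}<10^k C_V\,\delta,
\]
using $s_i<1$.

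For (2) and (3) I would use the Leibniz rule: $\nabla\varphi_\epsilon=\sum_i(\nabla\phi_i)\prod_{j\neq i}\phi_j$, and $\Lap\varphi_\epsilon=\sum_i(\Lap\phi_i)\prod_{j\neq i}\phi_j+\sum_{i\neq j}\langle\nabla\phi_i,\nabla\phi_j\rangle\prod_{l\neq i,j}\phi_l$. The diagonal Laplacian terms are harmless: since $\Sigma$ has bounded mean curvature, $|\Lap\phi_i|\leq C(1+\sup_\Sigma|H|)/s_i^2$ on $\Sigma$, so $\int_\Sigma|\Lap\phi_i|\leq C's_i^{k-2}$ and these sum to at most $C'\delta$. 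After discarding the bounded factors $\prod\phi_j\leq 1$, everything that remains, for both (2) and (3), reduces to controlling $\sum_{i,j\in G}\int_\Sigma|\nabla\phi_i|\,|\nabla\phi_j|$.

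The hard part is precisely this last double sum. The $(i,j)$ term vanishes unless the transition annuli meet, in which case, assuming $s_i\leq s_j$,
\[
\int_\Sigma|\nabla\phi_i|\,|\nabla\phi_j|\leq\frac{C^2}{s_is_j}\,\mathcal H^k\big(\Sigma\cap B(p_i,10s_i)\big)\leq C''\,\frac{s_i^{k-1}}{s_j}\leq C''s_i^{k-2}.
\]
Within a fixed dyadic scale Lemma \ref{lem:intersectionbd} bounds the number of such interactions by $c_N$, but a naive summation over the infinitely many scales picks up a factor $\sim\log(1/s_i)$, which $\sum_{i\in G}s_i^{k-2}<\delta$ does not absorb on its own; I expect resolving this to be the main obstacle. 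One clean fix is to replace the bumps by logarithmic cutoffs, $\phi_i\sim\log(|x-p_i|/s_i^2)/\log(1/s_i)$, whose Dirichlet energy over the transition region carries an extra factor $(\log(1/s_i))^{-2}$, so the sum becomes $\lesssim\sum_i s_i^{k-2}/\log(1/s_i)\lesssim\delta$ once $s_i<\tfrac12$; alternatively one argues directly that, because each $\nabla\phi_i$ is multiplied by the suppressing factors $\phi_j$ coming from the smaller balls that surround its annulus, only comparable scales genuinely interact and the scale sum collapses. With either device one obtains $\int_\Sigma|\nabla\varphi_\epsilon|^2+\int_\Sigma|\Lap\varphi_\epsilon|\leq C_N'''\delta$, and choosing $\delta$ so that $10^kC_V\delta<\epsilon$ and $C_N'''\delta<\epsilon$ finishes the proof. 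The per-ball estimates above are routine; it is this bookkeeping across scales that carries the argument.
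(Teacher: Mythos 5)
Your construction mirrors the one in the paper (cover, dyadic regularisation, product of scaled radial bumps, reduction of~(2) and~(3) to the double sum $\sum_{i,j}\int_\Sigma|\nabla\phi_i||\nabla\phi_j|$), so up to that point you are on the right track. The gap is that you stop the argument exactly at the step that makes it work, and then invent an obstacle that is not there.

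You derive, for $s_i\leq s_j$, the bound $\int_\Sigma|\nabla\phi_i||\nabla\phi_j|\leq C''\,s_i^{k-1}/s_j$ and immediately weaken it to $C'' s_i^{k-2}$, which is what creates the apparent logarithmic loss when you sum over scales. But $s_i^{k-1}/s_j = s_i^{k-2}\,(s_i/s_j)$, and the factor $s_i/s_j$ decays geometrically in the scale gap: if the smaller ball $i$ lies $h$ dyadic scales below $j$, then $s_i/s_j\lesssim 2^{-h}$. Combining this with your per-scale multiplicity bound (at most $c_N$ larger balls at scale $m_j$ can meet a fixed small ball), the sum over $j$ with $s_j\geq s_i$ becomes a convergent geometric series: $\sum_j \int_\Sigma|\nabla\phi_i||\nabla\phi_j|\leq c_N C'' s_i^{k-2}\sum_{h\geq 0}2^{-h}\lesssim s_i^{k-2}$, and then $\sum_i s_i^{k-2}<\delta$ finishes the job. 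This is precisely how the paper closes the argument (there, fixing the \emph{smaller} index $j$ and summing over larger balls $i\in\mathcal B_{m_j+h}$, with the bound $r_j^{k-1}/r_i\leq 2^{1-h}r_j^{k-2}$ and $108^N$ interactions per class). One further point of care: the multiplicity bound you invoke counts balls at a fixed scale containing a given \emph{point}, whereas you need to count balls at a fixed scale meeting a given smaller \emph{ball}; the paper handles this by passing to enlarged balls $B(p_i,r_i+r_j)$ all containing $p_j$ and applying Lemma \ref{lem:intersectionbd} to those, and you should do something similar rather than appeal to the point-multiplicity bound directly.

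As a consequence, the two work-arounds you float — logarithmic cutoffs, or the heuristic that suppressing factors $\phi_j$ kill non-comparable interactions — are unnecessary, and neither is carried out. The logarithmic cutoff in particular would need a fresh set of estimates for the cross terms and for $\int_\Sigma|\Lap\varphi_\epsilon|$, and the ``scale collapse'' claim is not justified as stated (the factors $\phi_j$ were already discarded by bounding them by $1$). The right move is to keep the sharp factor $s_i/s_j$ you already have; then the multi-scale bookkeeping closes cleanly and the rest of your proof goes through as written.
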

\begin{proof}
Fix a smooth radial cutoff function $\varphi:\mathbb{R}^N\rightarrow [0,1]$ such that $\phi=0$ in $B(0,1/2)$ and $\phi=1$ outside $B(0,1)$. The derivatives are bounded, say $|D\varphi|^2 + |D^2 \varphi| \leq C_0$. By scaling $\varphi$ to $B(x,r)$, $r\leq 1$, we get a cutoff function satisfying $|D\varphi|^2 + |D^2 \varphi| \leq C_0 r^{-2}$. 

Since $\Sigma$ has bounded mean curvature $|\vec{H}|\leq C_H$, the monotonicity formula \cite{simon} implies that there is a constant $C_V$ such that $\mathcal{H}^k(\Sigma \cap B(x,r)) \leq C_V r^k$ for any $r\leq 1$ and any $x$. Moreover, on $\Sigma \cap B_r(x)$ we will have \begin{equation} |\Lap \varphi| \leq k|D^2\varphi| + |\langle \vec{H}, D\varphi\rangle| \leq kC_0 r^{-2} + C_H \sqrt{C_0} r^{-1} \leq C_1 r^{-2}.\end{equation} 

Now let $\epsilon>0$. By definition of Hausdorff measure we may cover the singular set by finitely many balls $\{B(p_i,r_i/6)\}$ such that $r_i \leq 1$ and $\sum_{i} r_i^{k-2} <\epsilon$. Consider the enlarged cover $\{B(p_i,r_i/2)\}$. If $B(p_i,r_i/6)\cap B(p_j,r_j/6)\neq \emptyset$, $r_i \geq r_j$, then certainly $B(p_j, r_j/6) \subset B(p_i,r_i/2)$, so we could discard any such $j$. In doing so we obtain a cover $\{B(p_i,r_i/2)\}$ such that the $\{B(p_i,r_i/6)\}$ are pairwise disjoint. We may relabel the radii so that $r_1\geq r_2\geq \cdots$, and we partition the balls into classes of comparable radii $\mathcal{B}_m = \{i | 2^m \leq r_i < 2^{m+1}\}$. 

Cut off on each $B(p_i,r_i)$ by scaled cutoff functions $\varphi_i$ as above, then set $\varphi_\epsilon = \prod_i \varphi_i$. Immediately we have $\mathcal{H}^k(\{\varphi_\epsilon\neq 1\}) \leq \sum_i C_V r_i^{n} < C_V \epsilon$. For properties (2) and (3) we must bound the sum of all product terms $\int_\Sigma |\nabla \varphi_i||\nabla \varphi_j|$. Such a term is zero if $B(p_i,r_i)$ and $B(p_j,r_j)$ are disjoint, otherwise we have the bound \begin{equation} \label{eq:crossbd} \int_\Sigma |\nabla \varphi_i| |\nabla\varphi_j| \leq \frac{C_0 C_V}{r_i r_j} \min(r_i,r_j)^k.\end{equation} 

The procedure to estimate these cross terms is as follows: We fix $j$ and consider the sum over $i\leq j$ (that is, $r_i \geq r_j$). Letting $\mathcal{B}_{m_j}$ be the class containing $j$, we will bound the number of intersections that $B(p_j,r_j)$ can have with balls $B(p_i,r_i)$ in each class $\mathcal{B}_{m_j +h}$, $h\geq 0$. 

The key observation is that if $B(p_i,r_i) \cap B(p_j,r_j) \neq \emptyset$, then certainly the enlarged ball $B(p_i, r_i+r_j)$ must contain the point $p_j$. In particular all such enlarged balls must intersect each other. But for $i\in \mathcal{B}_{m_j+h}$, the radii $r_i+r_j$ are comparable to within a factor of \begin{equation}\frac{2^{m_j+h+1} + r_j}{2^{m_j+h}+r_j} \leq \frac{2^{m_j+h+1} + 2^{m_j+1}}{2^{m_j+h}+2^{m_j}} = 2.\end{equation} For any such $i$ we also have $\frac{r_j}{r_i} \leq \frac{2^{m_j+1}}{2^{m_j+h}} = 2^{1-h},$ so in particular $\frac{r_i+r_j}{r_i} \leq 3$ and the sub-balls $\{B(p_i,\frac{r_i+r_j}{18}) | i \in \mathcal{B}_{m_j+h}\}$ must be pairwise disjoint. Thus by Lemma \ref{lem:intersectionbd}, \begin{equation}\label{eq:mr3}\#\{i \in \mathcal{B}_{m_j+h} | B(p_i,r_i) \cap B(p_j,r_j) \neq \emptyset \} \leq \#\{ i \in \mathcal{B}_{m_j+h} | B(p_i, r_i+r_j) \ni p_j\} \leq 108^N.\end{equation} 

Using again that $\frac{r_j}{r_i} \leq 2^{1-h}$ for $i\in\mathcal{B}_{m_j+h}$, the estimate (\ref{eq:crossbd}) gives \begin{equation}\int_\Sigma |\nabla \varphi_i| |\nabla\varphi_j| \leq C_0 C_V r_j^{k-1} r_i^{-1} \leq C_0 C_V 2^{1-h} r_j^{k-2}.\end{equation} Then by (\ref{eq:mr3}) we have that\begin{equation}\sum_{\substack{i\leq j\\ i \in\mathcal{B}_{m_j+h}}} \int_\Sigma |\nabla \varphi_i| |\nabla\varphi_j| \leq 108^N C_0 C_V 2^{1-h} r_j^{k-2},\end{equation} and summing over $h\geq 0$ we get that $\sum_{i\leq j} \int_\Sigma |\nabla \varphi_i| |\nabla\varphi_j| \leq 4(108^N)C_0 C_V r_j^{k-2}.$

Finally, summing over $j$ gives  \begin{eqnarray}\sum_{i,j} \int_\Sigma |\nabla \varphi_i| |\nabla\varphi_j| &=& 2\sum_j \sum_{i\leq j} \int_\Sigma |\nabla \varphi_i| |\nabla\varphi_j| \\ \nonumber &\leq& 8(108^N)C_0 C_V \sum_j r_j^{k-2} < 8(108^N)C_0 C_V\epsilon. \end{eqnarray}

Since each $\varphi_j^2\leq1$ we conclude that \begin{equation} \int_\Sigma |\nabla \varphi_\epsilon|^2 \leq \sum_{i,j} \int_\Sigma |\nabla \varphi_i| |\nabla\varphi_j| <8(108^N)C_0 C_V\epsilon\end{equation} and \begin{eqnarray} \int_\Sigma |\Lap \varphi_\epsilon| &\leq& \sum_i \int_\Sigma |\Lap \varphi_i| + \sum_{i,j} \int_\Sigma |\nabla \varphi_i| |\nabla\varphi_j|  \\\nonumber&<&  \sum_i C_1C_V r_i^{k-2} +8(108^N)C_0 C_V\epsilon < (C_1+8(108^N)C_0)C_V\epsilon .\end{eqnarray}

Since $\epsilon$ was arbitrary this concludes the proof. 
\end{proof}

\begin{remark}
\label{rmk:morgan}
Morgan and Ritor\'{e} \cite{MR} state their construction also for somewhat more general assumptions as follows. If the ambient space is a regular cone, the argument proceeds with one extra cutoff around the vertex. If $k=2$ and the surface has isolated singular points, one may use logarithmic cutoff functions (and one does not need to be concerned with the intersections). Finally, to handle the noncompact case one may proceed by covering the singular set in annuli $B_{m+1}\setminus B_{m-1}$ with balls of radius $r_{m,i}< 1$ such that $\sum_i r_{m,i}^{k-2} \leq \frac{2^{-n}\epsilon}{C_V(m)}$. \end{remark}

\bibliographystyle{plain}
\bibliography{sphstabilityv9}
\end{document}